\documentclass{amsart}
\usepackage{amssymb}
\usepackage{hyperref}
\usepackage[initials]{amsrefs}
\usepackage{tikz-cd}
\usepackage[english]{babel}
\usepackage{tikz}
\usetikzlibrary{graphs,decorations.pathmorphing,decorations.markings}
%\usetikzlibrary{intersections, decorations.markings}
\usepackage{graphicx}
\usepackage{amssymb, mathrsfs, enumerate}
\usepackage{mathtools}
\usepackage{xcolor}
\usepackage{comment}

% Symbols

\newcommand{\bbN}{{\mathbb N}}
\newcommand{\bbQ}{{\mathbb Q}}
\newcommand{\bbR}{{\mathbb R}}

\newcommand{\bbZ}{{\mathbb Z}}
\newcommand{\bbC}{{\mathbb C}}

\newcommand{\calH}{\mathcal{H}}

\newcommand{\calO}{\mathcal{O}}

% General math commands:

\newcommand{\schlichting}{/\!/}

\newcommand{\covol}{\operatorname{covol}}

\newcommand{\defq}{\mathrel{\mathop:}=}
\newcommand{\id}{\operatorname{id}}

\newcommand{\res}{\operatorname{res}}

\newcommand{\SL}{\operatorname{SL}}

\newcommand{\Sym}{\operatorname{Sym}}

\newcommand{\PSL}{\operatorname{PSL}}

\newcommand{\GL}{\operatorname{GL}}

\newcommand{\FP}{\operatorname{FP}}
\newcommand{\KP}{\operatorname{KP}}
\newcommand{\Finite}{\operatorname{F}}

\newcommand{\topcat}[1]{\prescript{}{#1[G]}{\mathbf{top}}}
\newcommand{\discat}[1]{\prescript{}{#1[G]}{\mathbf{dis}}}

\newcommand{\dH}{{\operatorname{dH}}}

% Norms:

% Theorem styles and numbering:

\newtheorem{theorem}{Theorem}[section]
\newtheorem{lemma}[theorem]{Lemma}

\newtheorem{corollary}[theorem]{Corollary}

\newtheorem{proposition}[theorem]{Proposition}
\newtheorem{prop}[theorem]{Proposition}

\theoremstyle{definition}
\newtheorem{definition}[theorem]{Definition}

\newtheorem{example}[theorem]{Example}

%\numberwithin{equation}{section}

\newcommand{\comrs}[1]{\textcolor{red}{Roman:~#1}}

\begin{document}

\title{On homological properties of the Schlichting completion}
\date{February 2024}

\author{Laura Bonn}
\address{Karlsruhe Institute of Technology, Germany}
\email{laura.bonn@kit.edu}

\author{Roman Sauer}
\address{Karlsruhe Institute of Technology, Germany}
\email{roman.sauer@kit.edu}

\thanks{This research is funded by the Deutsche Forschungsgemeinschaft (DFG, German Research Foundation) – Project number 281869850. We thank Nicolas Monod for pointing out an error in the formulation of Theorem~\ref{thm: cohomology isomorphism}.}
\subjclass[2000]{22D99}
\keywords{Schlichting completion, totally disconnected groups, finiteness conditions}
\begin{abstract}
We show how finiteness properties of a group and a subgroup transfer to finiteness properties of the Schlichting completion relative to this subgroup. Further, we provide a criterion when the dense embedding of a discrete group into the Schlichting completion relative to one of its subgroups induces an isomorphism in (continuous) cohomology. As an application, we show that the continuous cohomology of the Neretin group vanishes in all positive degrees. 
\end{abstract}

\maketitle

\section{Introduction}

Finiteness conditions of discrete groups are higher-dimensional generalizations of the notions of being finitely generated and finitely presented. If a group satisfies suitable finiteness conditions, one can expect the group homology to enjoy nice properties, like, for example, being finitely generated. The appeal of studying finiteness conditions stems from the interaction between topology (specifically, the topology of classifying spaces) and algebra (notably, homological algebra involving chain complexes over the group ring). A similar theory for total disconnected locally compact Hausdorff groups, which we refer to as \emph{tdlc groups}, was still in its infancy a few years ago. What is different from the discrete case? On the topological side, tdlc groups often admit nice actions on CW-complexes or simplicial complexes but these actions are never free. On the algebraic side, smooth modules constitute a suitable abelian category; however, it possesses enough projectives only over the rationals.  

If one is content with studying finiteness conditions of a tdlc group~$G$ modulo the family of its compact-open subgroups, an elegant framework encompassing both topological and algebraic aspects becomes available. The class of $G$-equivariant CW-complexes with compact-open stabilizers enjoys a well-developed equivariant homotopy theory, similar to the discrete case~\cites{tomdieck, lueck}. Its algebraic counterpart, the category of chain complexes over the orbit category of $G$ with respect to the family of compact-open subgroups, is an abelian category, even when considered with integral coefficients~\cites{lueck}. Generally, however, it is unsatisfactory to be restricted to working modulo the family of compact-open subgroups. Thompson's group~$V$, for example, satisfies the finiteness condition $F_\infty$ in the usual sense -- an important early result of Geoghegan and Brown~\cite{brown+geoghegan} -- but not in the category of chain complexes of the orbit category with respect to the family of finite subgroups. There is a similar situation for Neretin's group, which is a totally disconnected analog of Thompson's group~$V$~\cite{sauer+thumann}. 

The recent work of Castellano and Corob Cook~\cite{ilaria+ged} drops all these limitations and establishes a convenient and elegant algebraic theory of finiteness conditions for tdlc groups, which works also with integral coefficients. Many of the fundamental properties of the discrete theory, as presented in Brown's foundational book~\cite{brown}, now find analogs in the study of tdlc groups. 

An important construction of tdlc groups from discrete groups is the Schlichting completion of a discrete group relative to a commensurated subgroup (see Section~\ref{sec: schlichting}). The contribution of this paper is to prove finiteness properties of the Schlichting completions and to relate finiteness properties and cohomology of the Schlichting completion to the ones of its defining discrete group. 

If the commensurated subgroup is normal, then the Schlichting completion is just the quotient, in particular, it is discrete. One should read Theorems~\ref{thm: finiteness for discrete group},~\ref{thm: finiteness for tdlc group} and~\ref{thm: euler characteristic} below with this in mind; the results for quotient groups are well known. 

The definitions of properties  $\FP^R_n$ and $\Finite_n$ are recalled in Section~\ref{sec: finiteness}.  

\begin{theorem}\label{thm: finiteness for discrete group}
    Let $G=\Gamma\schlichting\Lambda$ be the Schlichting completion of~$\Gamma$ relative to the commensurated subgroup $\Lambda<\Gamma$. Let $R$ be a commutative ring. Then the following holds. 
    \begin{enumerate}
        \item If $\Lambda$ and $G$ have type~$\FP^R_n$, then $\Gamma$ has type $\FP^R_n$.
        \item If $\Lambda$ and $G$ have type~$\Finite_n$, then $\Gamma$ has type $\Finite_n$.  
    \end{enumerate}
    \end{theorem}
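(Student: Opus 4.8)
\medskip
\noindent\emph{Proof idea.} Let $\pi\colon\Gamma\to G$ be the canonical homomorphism, which has dense image and kernel the normal core of~$\Lambda$, and write $U<G$ for the compact-open subgroup with $\pi^{-1}(U)=\Lambda$. The plan is to transport a witness of the finiteness of~$G$ -- a partial resolution of the constant module in the first part, a classifying space in the second -- back along~$\pi$ and read off finiteness of~$\Gamma$. Two features of the Schlichting completion do the bridging work. First, every compact-open subgroup $V<G$ is commensurable with~$U$, hence $\pi^{-1}(V)$ is commensurable with $\pi^{-1}(U)=\Lambda$ in~$\Gamma$; consequently $\pi^{-1}(V)$ inherits type $\FP^R_n$ (resp.\ $\Finite_n$) from~$\Lambda$, since these properties pass to commensurable subgroups (\cite{brown}). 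Second, since $\pi(\Gamma)$ is dense and $V$ is open, $\pi(\Gamma)$ already surjects onto the discrete set $G/V$, so $G/V$ is a \emph{single} $\Gamma$-orbit with point stabilizer $\pi^{-1}(V)$; that is, $G/V\cong\Gamma/\pi^{-1}(V)$ as $\Gamma$-sets and the restriction along~$\pi$ of the permutation module $R[G/V]$ is $R[\Gamma/\pi^{-1}(V)]=\Ind_{\pi^{-1}(V)}^{\Gamma}R$.

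\medskip
For~(1), pick a projective resolution of the constant module~$R$ over~$G$ that is finitely generated in degrees $\le n$; this exists because $G$ has type $\FP^R_n$. Restriction along~$\pi$ is exact, so applying it yields a resolution $C_\bullet\to R$ of $R\Gamma$-modules. Each $C_i$ with $i\le n$ is a direct summand of a finite direct sum of modules $R[G/V]$ ($V<G$ compact-open), so its $\pi$-restriction is a direct summand of a finite sum of modules $\Ind_{\pi^{-1}(V)}^{\Gamma}R$; as $\pi^{-1}(V)$ has type $\FP^R_n$, as induction from a subgroup of type $\FP^R_n$ preserves $\FP^R_n$, and as finite sums and direct summands of $\FP^R_n$-modules are $\FP^R_n$, each $C_i$ ($i\le n$) has type $\FP^R_n$ over $R\Gamma$. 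Finally, split $C_\bullet\to R$ into short exact sequences $0\to K_i\to C_i\to K_{i-1}\to 0$ with $K_{-1}=R$, note $K_{n-1}$ is finitely generated because $C_n$ is, and climb these sequences using the standard rule that in $0\to A\to B\to C\to 0$ with $B$ of type $\FP^R_m$ and $A$ of type $\FP^R_{m-1}$ the quotient $C$ is of type $\FP^R_m$ (\cite{brown}); after $n$ steps this shows $R$ has type $\FP^R_n$ over $R\Gamma$, i.e.\ $\Gamma$ has type $\FP^R_n$.

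\medskip
For~(2), type $\Finite_n$ of~$G$ provides (by~\cite{ilaria+ged}) a contractible $G$-CW-complex~$X$ with compact-open cell stabilizers and only finitely many $G$-orbits of cells in dimensions $\le n$ -- a cocompact-through-dimension-$n$ model of the classifying space for the family of compact-open subgroups. View~$X$ as a $\Gamma$-CW-complex via~$\pi$: it is still contractible; the stabilizer of a $d$-cell is some $\pi^{-1}(V)$, hence commensurable with~$\Lambda$ and so of type $\Finite_n$, in particular of type $\Finite_{n-d}$; and each $G$-orbit of cells, being a single $\Gamma$-orbit, contributes exactly one $\Gamma$-orbit, so~$X$ has finitely many $\Gamma$-orbits of cells in dimensions $\le n$. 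Brown's criterion for finiteness properties of groups acting on highly connected complexes (see, e.g., \cite{brown}) then yields that $\Gamma$ has type $\Finite_n$. One could run~(1) along the same topological lines whenever such a model is available, but the resolution argument above does not require one.

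\medskip
I expect the only genuinely non-formal step to be the bookkeeping of the first paragraph: unwinding the definitions of $\FP^R_n$ and $\Finite_n$ for the tdlc group~$G$ from~\cite{ilaria+ged} far enough to see that a finiteness witness for~$G$ is assembled, degree by degree, out of the objects $G/V$ with $V$ compact-open, and verifying -- from density of $\pi(\Gamma)$, openness and compactness of~$U$, and $\pi(\Gamma)\cap U=\pi(\Lambda)$ -- that $\pi^{-1}(V)$ is commensurable with~$\Lambda$ and that $G/V\cong\Gamma/\pi^{-1}(V)$ as $\Gamma$-sets. Once these translations are in place, the dévissage and Brown's criterion are entirely standard.
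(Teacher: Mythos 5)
Your proposal is correct. For part (1) it is essentially the paper's argument: the paper also restricts a permutation-module resolution of $R$ along $\alpha\colon\Gamma\to G$, uses $G/V\cong\Gamma/\alpha^{-1}(V)$ (Lemma~\ref{lem: Le Boudec}) and the commensurability of $\alpha^{-1}(V)$ with $\Lambda$ to show each restricted module $R[\Gamma/\alpha^{-1}(V)]$ has type $\FP^R_n$ over $R[\Gamma]$ (Lemma~\ref{lem: restriction}), and then assembles a resolution of the trivial $R[\Gamma]$-module. The only difference is the assembly step: the paper invokes the double-complex lemma (Lemma~\ref{lem: double complex lemma}) to produce an honest resolution $Q_\ast$ with $Q_k=\bigoplus_{i+j=k}Q_i^{(j)}$, whereas you run the equivalent d\'evissage through the short exact sequences $0\to K_i\to C_i\to K_{i-1}\to 0$; both are standard and interchangeable. (Minor terminological point: with the paper's Definition~\ref{def: finiteness properties}, the witness for $\FP^R_n$ of $G$ is a resolution by proper discrete permutation modules, which for $R=\bbZ$ need not be projective in $\discat{\bbZ}$; your argument only uses that the terms are finite sums of modules $R[G/V]$ and that the complex is exact, so nothing is lost.) For part (2) you take a genuinely different route: you restrict a cocompact-through-dimension-$n$ contractible smooth proper $G$-CW-complex to $\Gamma$ and apply Brown's finiteness criterion for all $n$, using that cell stabilizers are commensurable with $\Lambda$ and that $G$-orbits of cells are single $\Gamma$-orbits by density. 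The paper instead deduces (2) from (1) via the equivalence of $\Finite_n$ with ``compactly presented and $\FP^\bbZ_n$'' (Theorem~\ref{thm: relations between different finiteness conditions}) together with Theorem~\ref{thm: compactly presented}, whose proof is exactly your topological argument in the cases $n\le 2$. Your version is more self-contained and avoids routing through $\FP^{\bbZ}_n$, at the cost of invoking the full strength of Brown's criterion; the paper's version leans on the Castellano--Corob Cook equivalence instead.
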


\begin{theorem}\label{thm: finiteness for tdlc group}
    Let $G=\Gamma\schlichting\Lambda$ be the Schlichting completion of~$\Gamma$ relative to the commensurated subgroup $\Lambda<\Gamma$. Let $R$ be a commutative ring. Then the following holds. 
    \begin{enumerate}
        \item If $\Lambda$ has type $\FP^R_{n-1}$ and $\Gamma$ has type~$\FP^R_n$, then $G$ has type $\FP^R_n$.
        \item If $\Lambda$ has type $\Finite_{n-1}$ and $\Gamma$ has type~$\Finite_n$, then $G$ has type $\Finite_n$.  
    \end{enumerate}
    \end{theorem}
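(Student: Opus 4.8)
The plan is to reduce the theorem to a statement about the discrete group $\Gamma$ and prove that statement by induction. Write $U=\overline{\Lambda}$ for the closure of $\Lambda$ in $G$; it is a compact open subgroup with $U\cap\Gamma=\Lambda$, $\Gamma U=G$, and the dense inclusion $\Gamma\hookrightarrow G$ identifies the $G$-set $G/U$ with $\Gamma/\Lambda$. Let $\calF$ be the set of subgroups of $\Gamma$ commensurable with $\Lambda$. Since $\Lambda$ is commensurated, $\calF$ is closed under conjugation and finite intersection, each $H\in\calF$ is commensurated in $\Gamma$, satisfies $\Gamma\schlichting H=G$, and equals $\Gamma\cap\overline{H}$ for the compact open subgroup $\overline{H}\leq G$, and each $H\in\calF$ has the finiteness type of $\Lambda$ (finiteness properties being inherited by finite-index sub- and overgroups); so under the hypothesis every $H\in\calF$ has type $\FP^R_{n-1}$ (resp.\ $\Finite_{n-1}$). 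I would first show that $G$ has type $\FP^R_n$ (resp.\ $\Finite_n$) as soon as $\Gamma$ has type $\FP^R_n$ (resp.\ $\Finite_n$) \emph{relative to $\calF$}, i.e.\ the trivial $R[\Gamma]$-module $R$ has a resolution by finite sums of modules $R[\Gamma/H]$, $H\in\calF$, that is finitely generated through degree $n$ (resp.\ $\Gamma$ acts on an $(n-1)$-connected CW-complex with cell-stabilizers in $\calF$ and finitely many orbits of cells in dimensions $\leq n$). This implication is obtained by transport along $\Gamma\hookrightarrow G$: each $R[\Gamma/H]$ is the discrete $R[G]$-module $R[G/\overline{H}]$, a distinguished finitely generated projective object; a $\Gamma$-equivariant map of smooth $R[G]$-modules is automatically $G$-equivariant because $\Gamma$ is dense; and exactness of a resolution of $R[\Gamma]$-modules is a statement about underlying $R$-modules, hence survives re-promoting the $R[G]$-action. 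Thus a relative $R[\Gamma]$-resolution of $R$ finitely generated through degree $n$ is, verbatim, a resolution of the trivial discrete $R[G]$-module with that property; in the $\Finite$-case one restricts the corresponding $G$-CW-complex to $\Gamma$, and conversely using $R[G/V]=R[\Gamma/(V\cap\Gamma)]$, $V\cap\Gamma\in\calF$.

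It then remains to establish the discrete input: if $\Gamma$ has type $\FP^R_n$ and every $H\in\calF$ has type $\FP^R_{n-1}$, then $\Gamma$ has type $\FP^R_n$ relative to $\calF$ (and likewise for $\Finite$). I would induct on $n$, the case $n=0$ being the surjection $R[\Gamma/\Lambda]\twoheadrightarrow R$. For the inductive step the hypothesis gives a partial relative resolution $A_{n-1}\to\cdots\to A_0\to R\to 0$ with each $A_i$ a finite sum of modules $R[\Gamma/H]=\Ind_H^\Gamma R$, $H\in\calF$; since $H$ has type $\FP^R_{n-1}$, so has each $A_i$ over $R[\Gamma]$. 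Combining this with $\Gamma$ (i.e.\ the trivial module $R$) having type $\FP^R_n$ and iterating the standard behaviour of type $\FP$ under short exact sequences down the syzygies $\ker(A_i\to A_{i-1})$ shows that $K\defq\ker(A_{n-1}\to A_{n-2})$ is finitely generated over $R[\Gamma]$ — this is the single point where type $\FP^R_n$ of $\Gamma$ (rather than merely $\FP^R_{n-1}$) is used, and it is ``$\Lambda$ of type $\FP^R_{n-1}$'' that keeps the $A_i$ of type $\FP^R_{n-1}$ so that the dimension shift reaches degree $0$. A finite generating set of $K$ consists of finitely supported elements of a finite sum of $R[\Gamma/H]$'s, so each has $\Gamma$-stabilizer a finite intersection of conjugates of such $H$, again in $\calF$; hence $K$ is covered by a finite sum $A_n$ of modules $R[\Gamma/H']$, $H'\in\calF$, extending the resolution — which then continues with arbitrary such sums, since every submodule of a sum of $R[\Gamma/H]$'s is covered by one. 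The $\Finite$-version is the geometric shadow of this: build the complex skeleton by skeleton, at stage $n$ attaching finitely many orbits of $n$-cells with stabilizers in $\calF$ that kill a $\ZZ[\Gamma]$-generating set of $\pi_{n-1}$ of the previous skeleton, finite by the same syzygy estimate and attachable with the required stabilizers because the groups in $\calF$ have type $\Finite_{n-1}$; alternatively deduce $\Finite_n$ from $\FP^{\ZZ}_n$ and the low-degree cases via the standard implication $\FP_n+\Finite_2\Rightarrow\Finite_n$.

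I expect the reduction in the first paragraph to be the main obstacle: making the passage between the finiteness theory of the tdlc group $G$ (as recalled in Section~\ref{sec: finiteness}) and relative finiteness of the discrete group $\Gamma$ fully rigorous over an arbitrary commutative ring $R$, where $R[G/V]$ need not be projective, forces one to work within the exact category and with the resolutions actually used in the definitions, and to check that restriction along $\Gamma\hookrightarrow G$ is exact, preserves finite generation, and carries $R[G/V]$ to $R[\Gamma/(V\cap\Gamma)]$. In the discrete input the only subtle step is the finite generation of $K$, for which the standard short-exact-sequence estimates for $\FP$ must be available over $R[\Gamma]$; and in the $\Finite$-version the one genuinely topological subtlety is that the attached cells must carry stabilizers in $\calF$ rather than be free, which is exactly where $\Lambda$ — and hence all of $\calF$ — being of type $\Finite_{n-1}$ is needed.
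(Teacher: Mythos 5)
Your overall strategy --- build a resolution of $R$ over $R[\Gamma]$ by modules $R[\Gamma/H]$ with $H$ in a suitable family, then promote it verbatim to a resolution by proper discrete permutation $R[G]$-modules using density of $\Gamma$ in $G$ --- is genuinely different in organization from the paper's, which never leaves the $G$-side: the paper proves Proposition~\ref{prop: more general finiteness result for modules} by induction on $n$, at each stage choosing a finitely generated proper permutation $G$-module $P\twoheadrightarrow M$, restricting the kernel sequence to $\Gamma$, and combining Lemma~\ref{lem: restriction} with the short-exact-sequence calculus of Proposition~\ref{prop: finiteness of short exact sequences}. Your syzygy estimate (that $\ker(A_{n-1}\to A_{n-2})$ is finitely generated because $R$ is $\FP^R_n$ over $R[\Gamma]$ while the $A_i$ are only $\FP^R_{n-1}$) is the same dimension shift in different clothing, and your density observations (a $\Gamma$-equivariant map between discrete $R[G]$-modules is automatically $G$-equivariant; exactness is detected on underlying $R$-modules) are correct.

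There is, however, one step that fails as written: the claim that every $H\in\calF$, i.e.\ every subgroup commensurable with $\Lambda$, satisfies $H=\Gamma\cap\overline{H}$ and $R[\Gamma/H]=R[G/\overline{H}]$. Take $\Gamma=\bbZ$, $\Lambda=2\bbZ$, $H=3\bbZ$: then $G=\bbZ/2$, $\overline{\alpha(H)}=G$, and $\alpha^{-1}(\overline{\alpha(H)})=\bbZ\neq H$; correspondingly $R[\Gamma/H]=R[\bbZ/3]$ carries no continuous $G$-action restricting to the given $\Gamma$-action, so it is not a discrete $R[G]$-module and the promotion step breaks for such $H$. Lemma~\ref{lem: Le Boudec}(2) gives $\Gamma\cap\overline{\Lambda}=\Lambda$ (modulo $\ker\alpha$) but has no analogue for arbitrary commensurable subgroups. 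The repair is to shrink $\calF$ to the subgroups $\alpha^{-1}(V)$ with $V<G$ compact open: this family still contains $\Lambda$, is closed under conjugation and finite intersection, consists of groups commensurable with $\Lambda$ (hence of type $\FP^R_{n-1}$), and does satisfy $R[\Gamma/\alpha^{-1}(V)]\cong\res^G_\Gamma R[G/V]$. Your induction stays inside this smaller family provided each generator $x$ of a syzygy is covered by $R[\Gamma/H']$ for the family member $H'=\alpha^{-1}\bigl(\bigcap_j\alpha(\gamma_j)V_j\alpha(\gamma_j)^{-1}\bigr)$ contained in $\Stab_\Gamma(x)$, rather than by the full stabilizer, which need not lie in the family either. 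Finally, for part (2) your fallback ``$\FP^{\bbZ}_n$ plus the low-degree cases'' silently uses that $G$ is compactly presented, which is Le Boudec's theorem quoted as Theorem~\ref{thm: compactly presented}(4); your direct geometric sketch does not obviously handle the $\pi_1$-step at $n=2$.
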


Theorems~\ref{thm: finiteness for discrete group} and~\ref{thm: finiteness for tdlc group} are proved in Section~\ref{sec: finiteness}. 

It is an interesting question when the restriction map from the continuous cohomology of a locally compact group to the cohomology of a dense subgroup is an isomorphism. For the inclusion $\SL_n(\bbQ)\hookrightarrow \SL_n(\bbR)$ this was proved by Borel-Yang~\cite{borel+yang} in order to solve the rank conjecture in algebraic K-theory. In the next result, which is proved in Section~\ref{sec: continuous cohom}, we consider the easier situation of the inclusion of a discrete group into its Schlichting completion. 

\begin{theorem}\label{thm: cohomology isomorphism}
    Let $G=\Gamma\schlichting\Lambda$ be the Schlichting completion of~$\Gamma$ relative to a locally finite commensurated subgroup $\Lambda<\Gamma$. Then the restriction map $H^\ast_c(G,\bbR)\to H^\ast(\Gamma, \bbR)$ is an isomorphism in all degrees.   
\end{theorem}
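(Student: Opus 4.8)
The plan is to exhibit a single explicit cochain complex that computes both $H^\ast_c(G,\bbR)$ and $H^\ast(\Gamma,\bbR)$, so that the restriction map becomes an inclusion of invariants which is in fact an equality.

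\emph{Setup and a reduction to the core-free case.} Write $U\defq\overline{\Lambda}\leq G$ for the closure of $\Lambda$; it is a compact-open subgroup, $\Gamma$ is dense in $G$, and the canonical map induces a $\Gamma$-equivariant bijection $\Gamma/\Lambda\cong G/U$; in particular $\Gamma$ acts transitively on $G/U$. If $\Lambda$ is not core-free, let $N\trianglelefteq\Gamma$ be the largest normal subgroup of $\Gamma$ contained in $\Lambda$ (equivalently, the kernel of $\Gamma\to\Sym(\Gamma/\Lambda)$); it is locally finite, one has $G=(\Gamma/N)\schlichting(\Lambda/N)$, and the restriction map factors as $H^\ast_c(G,\bbR)\to H^\ast(\Gamma/N,\bbR)\to H^\ast(\Gamma,\bbR)$ with the second map the inflation homomorphism. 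By the Lyndon--Hochschild--Serre spectral sequence together with the vanishing of $H^{>0}(N,\bbR)$ for the locally finite group $N$, inflation is an isomorphism; so we may assume $\Gamma\hookrightarrow G$ and $\Lambda=\Gamma\cap U$.

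\emph{Step 1: $\Gamma$ and $G$ have the same orbits on $(G/U)^{n+1}$.} For $v=(x_0,\dots,x_n)\in(G/U)^{n+1}$, the stabiliser $\Stab_G(v)=\bigcap_i\Stab_G(x_i)$ is a finite intersection of conjugates of $U$, hence compact-open. If $w=gv$ lies in the $G$-orbit of $v$, then $g\,\Stab_G(v)$ is a nonempty open subset of $G$ and therefore meets the dense subgroup $\Gamma$; any $\gamma$ in this intersection satisfies $\gamma v=w$. Hence $\Gamma\cdot v=G\cdot v$, so the canonical map $\Gamma\backslash(G/U)^{n+1}\to G\backslash(G/U)^{n+1}$ is a bijection for every $n$; since the face and degeneracy operators of the simplicial set $(G/U)^{\bullet+1}$ are $G$-equivariant, these bijections are compatible with the simplicial structure.

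\emph{Step 2: a resolution that serves both groups.} Consider the augmented complex
\[
0\longrightarrow\bbR\longrightarrow C(G/U,\bbR)\longrightarrow C\big((G/U)^{2},\bbR\big)\longrightarrow C\big((G/U)^{3},\bbR\big)\longrightarrow\cdots,
\]
where $C\big((G/U)^{n+1},\bbR\big)$ denotes the (automatically continuous) real-valued functions on the discrete $G$-set $(G/U)^{n+1}$, with the diagonal $G$-action and the usual alternating-sum differential; evaluation of a function at the base point of $G/U$ in the first coordinate gives an $\bbR$-linear contracting homotopy, so the complex is exact. As a $G$-set, $(G/U)^{n+1}$ is a disjoint union of coset spaces $G/K$ with $K$ compact-open, and each $C(G/K,\bbR)$ is a relatively injective continuous $G$-module, being coinduced from the trivial module of the compact group $K$; hence $C\big((G/U)^{n+1},\bbR\big)$ is relatively injective, the displayed complex is a strong relatively injective resolution of $\bbR$, and by the machinery of Section~\ref{sec: continuous cohom} one gets $H^\ast_c(G,\bbR)\cong H^\ast\big(C((G/U)^{\bullet+1},\bbR)^G\big)$. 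Now read the \emph{same} complex as a complex of $\bbR\Gamma$-modules via $\Gamma\hookrightarrow G$. It is still exact, and each term decomposes as $\prod_\sigma\operatorname{Coind}_{\Lambda_\sigma}^{\Gamma}\bbR$, the product running over the $\Gamma$-orbits $\sigma$ of $(n+1)$-tuples; translating by $\Gamma$ we may take the first coordinate of a representative to be the base point, so that $\Lambda_\sigma\leq\Lambda$ is locally finite. Since locally finite groups are cohomologically trivial over $\bbR$ in positive degrees, since $H^\ast(\Gamma,-)$ commutes with arbitrary products, and by Shapiro's lemma, each term is acyclic for $H^\ast(\Gamma,-)$; hence $H^\ast(\Gamma,\bbR)\cong H^\ast\big(C((G/U)^{\bullet+1},\bbR)^\Gamma\big)$.

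\emph{Step 3 and the main obstacle.} Since the $\bbR\Gamma$-module structure is the restriction of the $G$-module structure, the restriction map $H^\ast_c(G,\bbR)\to H^\ast(\Gamma,\bbR)$ is induced by the inclusion of complexes $C((G/U)^{\bullet+1},\bbR)^G\hookrightarrow C((G/U)^{\bullet+1},\bbR)^\Gamma$. But $C\big((G/U)^{n+1},\bbR\big)^G=\operatorname{Fun}\big(G\backslash(G/U)^{n+1},\bbR\big)$ and $C\big((G/U)^{n+1},\bbR\big)^\Gamma=\operatorname{Fun}\big(\Gamma\backslash(G/U)^{n+1},\bbR\big)$, and these agree by Step~1; so that inclusion is an equality of cochain complexes, and the restriction map is an isomorphism in every degree. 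The conceptual heart of the argument is Step~1 and is elementary; the real work is in Step~2, namely identifying the concrete complex $C((G/U)^{\bullet+1},\bbR)$ with continuous cohomology in the sense of Section~\ref{sec: continuous cohom} (relative injectivity of $C(G/K,\bbR)$ for compact-open $K$, and strong-resolution bookkeeping), together with the two standard facts used on the $\Gamma$-side — that locally finite groups are $\bbR$-acyclic in positive degrees, and that ordinary group cohomology commutes with products, which is exactly what keeps the possibly infinite orbit decomposition harmless.
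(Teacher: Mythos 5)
Your proof is correct, and while its overall skeleton coincides with the paper's -- both arguments funnel everything through the complex of invariant functions on $(G/U)^{\bullet+1}$, both use the comparison map $\phi$ of Proposition~\ref{prop: continuous vs discrete cohomology} (your ``relative injectivity of $C(G/K,\bbR)$'' packaging is a standard substitute for the explicit homotopy $\rho$, $S^n_i$ constructed there), and your Step~1 is exactly the density argument hiding behind the paper's remark that the forgetful map $\hom(P_\ast,\bbR)^G\to\hom(\res^G_\Gamma P_\ast,\bbR)^\Gamma$ is ``obviously an isomorphism'' -- the key technical step on the $\Gamma$-side is done by a genuinely different route. The paper proves that $\hom\bigl(\res^G_\Gamma P_\ast,\bbR\bigr)^\Gamma$ computes $H^\ast(\Gamma,\bbR)$ homologically: local finiteness of $\Lambda$ gives flatness of $\bbR[\Gamma/\Lambda']$ over $\bbR[\Gamma]$ via Bieri, Weibel's lemma compares the flat resolution with the bar resolution after $\otimes_{\bbR[\Gamma]}\bbR$, and the universal coefficient theorem over the field $\bbR$ dualizes. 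You instead argue cohomologically: decompose each term as $\prod_\sigma\operatorname{Coind}_{\Lambda_\sigma}^\Gamma\bbR$ with $\Lambda_\sigma$ locally finite, invoke Shapiro, the vanishing $H^{>0}(\Lambda_\sigma,\bbR)=0$, and the fact that $H^\ast(\Gamma,-)$ commutes with products, so that the resolution is termwise acyclic and its $\Gamma$-invariants compute $H^\ast(\Gamma,\bbR)$ directly. Your route is more self-contained (no flatness or UCT input) and ends with a literal equality of cochain complexes rather than a zig-zag of weak equivalences; it also isolates precisely where local finiteness enters, namely in the $\bbR$-acyclicity of the tuple stabilizers. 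The paper's route is shorter given the cited results. Two small points of hygiene in your write-up: the vanishing $H^{>0}(\Lambda_\sigma,\bbR)=0$ holds for \emph{trivial} real coefficients (via $H_\ast$ commuting with directed colimits of groups plus universal coefficients over a field), not because infinite locally finite groups have $\operatorname{cd}_\bbR=0$ -- they do not -- but Shapiro reduces you exactly to trivial coefficients, so you are fine; and the citation of ``the machinery of Section~\ref{sec: continuous cohom}'' for relative injectivity is a slight mismatch, since that section proves the needed comparison by an explicit chain homotopy rather than by setting up relatively injective resolutions, though the statement you need is precisely what it delivers.
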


Neretin's group $N_d$, which is the group of almost automorphisms of a non-rooted $(d+1)$-regular tree, is the Schlichting completion of the Higman-Thompson's group~$V_{d,2}$ relative to a locally finite commensurated subgroup~\cite{boudec}*{Example~6.7}.     
Brown~\cite{brown-acyclic} showed  the rational acyclicity\footnote{Szymik and Wahl proved the much stronger integral acyclicity~\cite{szymik+wahl}.} of~$V_{d,2}$. 

We obtain the following consequence. 

\begin{corollary}
    Let $d\ge 2$. The continuous cohomology $H^i_c(N_d,\bbR)$ of Neretin's group~$N_d$  vanishes for every $i>0$. 
\end{corollary}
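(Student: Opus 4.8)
The plan is to deduce the corollary by combining the three ingredients collected in the paragraphs preceding the statement; essentially no new argument is needed. First I would record, following Le~Boudec \cite{boudec}*{Example~6.7}, that Neretin's group $N_d$ is the Schlichting completion $V_{d,2}\schlichting\Lambda$ of the Higman--Thompson group $V_{d,2}$ relative to a commensurated subgroup~$\Lambda$, the key point being that $\Lambda$ is \emph{locally finite} --- it is a directed union of finite groups of tree automorphisms. This is precisely the hypothesis required to apply Theorem~\ref{thm: cohomology isomorphism}.

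Next I would apply Theorem~\ref{thm: cohomology isomorphism} with $\Gamma = V_{d,2}$, the subgroup $\Lambda$ as above, and $G = N_d$, obtaining that the restriction map
\[
H^\ast_c(N_d,\bbR)\longrightarrow H^\ast(V_{d,2},\bbR)
\]
is an isomorphism in every degree. Finally I would invoke Brown's rational acyclicity of the Higman--Thompson groups \cite{brown-acyclic} (or the stronger integral acyclicity of Szymik--Wahl \cite{szymik+wahl}): $H^i(V_{d,2},\bbR)=0$ for all $i>0$. Combined with the previous isomorphism this gives $H^i_c(N_d,\bbR)=0$ for all $i>0$, as claimed. (In degree $0$ both sides are the constants $\bbR$, consistent with the isomorphism, but this falls outside the range of the assertion.)

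I do not expect a genuine obstacle here: once Theorem~\ref{thm: cohomology isomorphism} is available, the corollary is a formal concatenation of citations. The one point meriting a moment of care is confirming that the commensurated subgroup exhibiting $N_d$ as a Schlichting completion of $V_{d,2}$ really is locally finite, which is exactly what \cite{boudec} records --- the subgroup in question is an increasing union of finite automorphism groups of finite subtrees, hence locally finite.
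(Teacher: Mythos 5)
Your argument is exactly the paper's: the corollary is stated as an immediate consequence of Theorem~\ref{thm: cohomology isomorphism} applied to $N_d = V_{d,2}\schlichting\Lambda$ with $\Lambda$ locally finite (Le Boudec, Example~6.7) together with Brown's rational acyclicity of $V_{d,2}$. Your proposal is correct and matches the intended proof.
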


In the next result, $\chi^{(2)}(G, \mu)$ denotes the Euler characteristic of an unimodular tdlc group. This invariant is discussed in Section~\ref{sec: Euler}. If $G$ is a discrete group with a finite model of its classifying space and $\mu$ is the counting measure, then $\chi^{(2)}(G, \mu)$ is the usual Euler characteristic. If $G$ is discrete and has torsion, it is the $\ell^2$-Euler characteristic whenever it is defined.

\begin{theorem}\label{thm: euler characteristic}
    Let $G=\Gamma\schlichting\Lambda$ be the Schlichting completion of~$\Gamma$ relative to the commensurated subgroup $\Lambda<\Gamma$. Suppose that $G$ is unimodular and that $\Lambda$ and $\Gamma$ have type $\FP^\bbQ$. Then $G$ has type $\FP^\bbQ$ and we have 
    \[ \chi^{(2)}(\Lambda)\cdot\chi^{(2)}(G,\mu)=\chi^{(2)}(\Gamma)\]
    for the Haar measure $\mu$ with $\mu(U)=1$ where $U<G$ is the closure of $\Lambda$. 
\end{theorem}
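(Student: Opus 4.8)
The plan is to reduce the multiplicativity of $\chi^{(2)}$ to a statement about chain complexes over the orbit category, exploiting the dense embedding $\Gamma \hookrightarrow G$ and the fact that $U$ — the closure of $\Lambda$ in $G$ — is a compact-open subgroup of $G$. First I would establish that $G$ has type $\FP^\bbQ$: since $\Lambda$ has type $\FP^\bbQ$ and $\Gamma$ has type $\FP^\bbQ$, in particular $\Lambda$ has type $\FP^\bbQ_{n-1}$ and $\Gamma$ has type $\FP^\bbQ_n$ for all $n$, so Theorem~\ref{thm: finiteness for tdlc group}(1) gives $G$ type $\FP^\bbQ_n$ for all $n$; one then upgrades this to $\FP^\bbQ$ (a finite-length resolution) using that $\Gamma$ has finite rational cohomological dimension, which bounds that of $G$ via the same comparison of resolutions used in Theorem~\ref{thm: finiteness for tdlc group}.

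Next I would set up the Euler characteristic bookkeeping. The key input is that $\chi^{(2)}(G,\mu)$ is computed from a finite projective resolution $P_\bullet$ of the trivial module over the orbit category of $G$ with respect to compact-open subgroups, by taking an alternating sum of ranks weighted by $1/\mu(K)$ over the cells with stabilizer $K$; normalizing $\mu(U)=1$ means the cells with stabilizer exactly $U$ contribute with weight $1$, and a cell with stabilizer $K \le U$ of index $[U:K]$ contributes with weight $[U:K]$ (since $\mu(K) = 1/[U:K]$). The construction in Section~\ref{sec: finiteness} that proves Theorem~\ref{thm: finiteness for tdlc group} produces such a resolution for $G$ out of a resolution for $\Gamma$ together with one for $\Lambda$: concretely, a finite free resolution of $\bbQ$ over $\bbQ[\Gamma]$ induces, after inducing up along $\Gamma \hookrightarrow G$, a complex over the orbit category whose cells have stabilizers conjugate into $U$, and the discrepancy from being a resolution is corrected using a resolution of $\Lambda$. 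Tracking the $\Gamma$-ranks through this construction, each free $\bbQ[\Gamma]$-generator in degree $i$ becomes a finite sum of orbit-category generators whose total weighted count equals the $\Lambda$-weighted contribution; summing the alternating signs gives $\chi^{(2)}(G,\mu) = \chi^{(2)}(\Gamma)/\chi^{(2)}(\Lambda)$, which is the claimed identity after clearing the denominator (here $\chi^{(2)}(\Lambda) \neq 0$ is not needed as a hypothesis, since we read the equation as $\chi^{(2)}(\Lambda)\cdot\chi^{(2)}(G,\mu) = \chi^{(2)}(\Gamma)$).

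The main obstacle I anticipate is making the ``weighted rank count'' argument precise at the level of the orbit category rather than waving at cells: one must identify, for the specific resolution built in Section~\ref{sec: finiteness}, exactly which compact-open subgroups appear as stabilizers and with what multiplicity, and check that the Haar-measure weights $1/\mu(K)$ assemble correctly. Equivalently — and this is probably the cleanest route — I would prove a general multiplicativity lemma: if $1 \to \Lambda \to \Gamma \to G$ is the Schlichting setup with $\Lambda$, $\Gamma$ of type $\FP^\bbQ$, then in the (rationalized) representation ring or the appropriate $K_0$ of the orbit category one has a transfer/induction identity $[\bbQ_\Gamma] \mapsto [\bbQ_G] \cdot [\text{something of Euler characteristic } \chi^{(2)}(\Lambda)]$, and then apply the additive, multiplicative ``dimension'' homomorphism $\chi^{(2)}(-,\mu)$. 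The compatibility of this homomorphism with induction along $\Gamma \hookrightarrow G$ — essentially that inducing a free $\bbQ[\Gamma]$-module of rank $1$ yields something of $\chi^{(2)}(\cdot,\mu)$-value $1/\covol(\Lambda\text{-part}) = 1/\chi^{(2)}(\Lambda)^{-1}$... — is the delicate point and is where the normalization $\mu(U)=1$ is genuinely used.

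I would organize the write-up as: (i) $G$ has type $\FP^\bbQ$; (ii) recollection of the formula for $\chi^{(2)}(G,\mu)$ from Section~\ref{sec: Euler} in terms of a finite projective orbit-category resolution; (iii) the explicit resolution of $\bbQ_G$ obtained from the proof of Theorem~\ref{thm: finiteness for tdlc group}; (iv) the weighted alternating-sum computation yielding the product formula, with the $\mu(U)=1$ normalization entering in step (iv).
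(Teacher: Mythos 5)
Your proposal assembles the right ingredients for this theorem --- a finite proper permutation resolution, the Haar-measure weights $\mu(K)^{-1}$ with the normalization $\mu(U)=1$, multiplicativity of $\chi^{(2)}$ under finite index, and a double-complex assembly --- but the central computation runs in the opposite direction from what is feasible, and the decisive step is left as an acknowledged gap. You propose to \emph{build} a resolution of the trivial module over the orbit category of $G$ out of a free $\bbQ[\Gamma]$-resolution together with a resolution for $\Lambda$, and then to track weighted ranks through ``the construction in Section~\ref{sec: finiteness}''. But the proof of Theorem~\ref{thm: finiteness for tdlc group} is an induction via the short-exact-sequence calculus of Proposition~\ref{prop: finiteness of short exact sequences}; it produces no explicit resolution whose generators and stabilizers one could enumerate, so there is nothing to track weights through. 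Moreover, ``inducing up along $\Gamma\hookrightarrow G$'' is not available here: $\Gamma$ is \emph{dense} in $G$, not open or cocompact, so a free $\bbQ[\Gamma]$-module has no evident induced proper discrete permutation $G$-module, and the assertion that the resulting cells have stabilizers conjugate into $U$ with a discrepancy ``corrected using a resolution of $\Lambda$'' is precisely the content that would need to be proved. Finally, your bookkeeping outputs $\chi^{(2)}(G,\mu)=\chi^{(2)}(\Gamma)/\chi^{(2)}(\Lambda)$, which is meaningless when $\chi^{(2)}(\Lambda)=0$ --- and this case actually occurs in the paper's own Abels--Brown example, where $\Lambda_n$ is infinite nilpotent. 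One cannot simply ``read'' the quotient identity as the product identity: in the degenerate case the product form asserts $\chi^{(2)}(\Gamma)=0$, and your derivation gives no access to that conclusion.

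The paper's proof goes the other way and avoids all of these issues. Starting from a finite resolution of $\bbC$ by proper discrete permutation $G$-modules $\bigoplus_{i\in I_j}\bbC[G/U_i^{(j)}]$, it \emph{restricts} to $\Gamma$; by Lemma~\ref{lem: Le Boudec} one has $\res^G_\Gamma\bbC[G/U_i^{(j)}]\cong\bbC[\Gamma/\alpha^{-1}(U_i^{(j)})]$ with $\alpha^{-1}(U_i^{(j)})$ commensurable with $\Lambda$, hence of type $\FP^\bbQ$. Each such permutation module is resolved by inducing a finite projective resolution up from $\alpha^{-1}(U_i^{(j)})$ to $\Gamma$, these are assembled via Lemma~\ref{lem: double complex lemma} into a finite projective $\bbC[\Gamma]$-resolution of $\bbC$, and $\chi^{(2)}(\Gamma)$ is computed by additivity of the von Neumann dimension together with $\chi^{(2)}\bigl(\alpha^{-1}(U_i^{(j)})\bigr)=\mu\bigl(U_i^{(j)}\bigr)^{-1}\chi^{(2)}(\Lambda)$ from~\eqref{eq: multiplicativity for finite index}. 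This yields the identity directly in product form, with no division by $\chi^{(2)}(\Lambda)$. Your step (i), establishing that $G$ has type $\FP^\bbQ$, is a reasonable (and worthwhile) addition, but steps (iii)--(iv) should be replaced by this restriction argument.
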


\section{The Schlichting completion}\label{sec: schlichting}

The Schlichting completion of a discrete group $\Gamma$ relative to the commensurated subgroup $\Lambda$ is a tdlc group which we denote by $G=\Gamma \schlichting\Lambda$. 
This construction was introduced in~\cite{tzanev}, following an earlier idea of Schlichting~\cite{schlichting}. 

A nice background reference is the work of Shalom and Willis~\cite{shalom+willis} who call the Schlichting completion the  \emph{relative profinite completion of $\Gamma$ with respect to $\Lambda$}.

Let $\Gamma$ be a discrete group and $\Lambda < \Gamma$ be a commensurated subgroup. 
Then $\Gamma$ acts by left multiplication on $\Gamma /\Lambda$ and thus defines a homomorphism \[\alpha \colon \Gamma \to \Sym(\Gamma/\Lambda).\] 
We equip $\Sym(\Gamma/\Lambda)$ with the topology of pointwise convergence.  
The closure \[G=\Gamma \schlichting \Lambda= \overline{\alpha(\Gamma)}\] is the \emph{Schlichting completion of $\Gamma$ relative to the commensurated subgroup $\Lambda$}.
Strictly speaking, the Schlichting completion is not a completion of~$\Gamma$ since $\alpha$ might not be injective. 

In the following, we collect some properties of this construction. 

\begin{prop}[\cite{shalom+willis}*{Section~3}]
    Let $G=\Gamma\schlichting\Lambda$ be the Schlichting completion of~$\Gamma$ relative to the commensurated subgroup $\Lambda<\Gamma$.
    \begin{enumerate}   
        \item If $\Lambda$ is normal in $\Gamma$ then $G=\Gamma/\Lambda$.
        \item $G$ is a tdlc group. 
        \item The map $\alpha \colon \Gamma \to G$ has a dense image. Its kernel is the largest subgroup that is normal in $\Gamma$ and contained in $\Lambda$.  
        \item The closure of the image $\overline{\alpha(\Lambda)}$ is a compact open subgroup of $G$. In particular, it is commensurated in $G$. 
    \end{enumerate}
\end{prop}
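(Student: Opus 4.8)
The plan is to argue everything directly from the defining embedding of $G=\overline{\alpha(\Gamma)}$ into $\Sym(\Gamma/\Lambda)$. The topology of pointwise convergence on $\Sym(\Gamma/\Lambda)$ has as a neighbourhood basis of the identity the fixators $\{\sigma:\sigma|_F=\id\}$ of the finite subsets $F\subseteq\Gamma/\Lambda$, and these are open subgroups; in particular $\Sym(\Gamma/\Lambda)$ is a Hausdorff, totally disconnected topological group, and therefore so is its closed subgroup $G$. This already gives the Hausdorff and totally disconnected part of~(2), and it reduces~(2) to producing a compact open subgroup. For~(3), density of $\alpha(\Gamma)$ in $G$ is the definition, and a direct computation gives $\ker\alpha=\{g\in\Gamma:gh\Lambda=h\Lambda\text{ for all }h\in\Gamma\}=\bigcap_{h\in\Gamma}h\Lambda h^{-1}$; this subgroup is normal in $\Gamma$ (conjugation permutes the intersectands), is contained in $\Lambda$ (take $h=e$), and contains every normal subgroup of $\Gamma$ lying inside $\Lambda$, so it is the largest such subgroup.

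Next I would construct the compact open subgroup. Let $P\defq\Stab_{\Sym(\Gamma/\Lambda)}(e\Lambda)$, which is one of the basic clopen subgroups above, and set $U\defq G\cap P$, a clopen subgroup of $G$. I claim $U=\overline{\alpha(\Lambda)}$: since $\alpha(\Lambda)\subseteq P$ and $P$ is closed, $\overline{\alpha(\Lambda)}\subseteq U$; conversely, if $\sigma\in U$ is the limit of a net $\alpha(g_i)$ with $g_i\in\Gamma$, then $g_i\Lambda=\alpha(g_i)(e\Lambda)$ converges to $\sigma(e\Lambda)=e\Lambda$ in the discrete set $\Gamma/\Lambda$, so $g_i\in\Lambda$ eventually and $\sigma\in\overline{\alpha(\Lambda)}$. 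Now commensuration enters: for each $g\in\Gamma$ the $\alpha(\Lambda)$-orbit of $g\Lambda$ has cardinality $[\Lambda:\Lambda\cap g\Lambda g^{-1}]<\infty$, so $\alpha(\Lambda)$ acts on $\Gamma/\Lambda$ with all orbits finite. A pointwise limit of elements of $\alpha(\Lambda)$ maps each orbit $O$ into itself (as $O$ is finite, hence discrete), and being injective on the finite set $O$ it permutes $O$; hence $\overline{\alpha(\Lambda)}$ is contained in the subgroup $\prod_{O}\Sym(O)$ of permutations preserving each orbit. The latter is closed in $\Sym(\Gamma/\Lambda)$ and, in the subspace topology, is a product of finite symmetric groups, which is compact. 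Thus $U=\overline{\alpha(\Lambda)}$ is compact, completing~(2), and it is a compact open subgroup of $G$ as asserted in the first part of~(4). Finally, that $U$ is commensurated in $G$ is the general fact about tdlc groups: for $g\in G$ the group $U\cap gUg^{-1}$ is an open subgroup of the compact group $U$, hence of finite index in $U$, and symmetrically of finite index in $gUg^{-1}$.

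For~(1), suppose $\Lambda\trianglelefteq\Gamma$. Then $\ker\alpha=\Lambda$ by the formula above, so $\alpha$ induces an injection $\Gamma/\Lambda\hookrightarrow\Sym(\Gamma/\Lambda)$, $g\Lambda\mapsto L_g$, by left translations. I would check its image is closed: if $L_{g_i}\to\sigma$ pointwise, then $g_i\Lambda=L_{g_i}(e\Lambda)\to\sigma(e\Lambda)$; writing $h\Lambda=\sigma(e\Lambda)$ we get $g_i\in h\Lambda$ eventually, and by normality $L_{g_i}=L_h$ eventually, whence $\sigma=L_h$ lies in the image. Hence $G$ equals this image, i.e. $G=\Gamma/\Lambda$, and the subspace topology is discrete since $G\cap P$ is trivial. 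The main obstacle in the whole argument is the compactness of $\overline{\alpha(\Lambda)}$ in~(2) and~(4): the point is to extract, from the commensuration hypothesis, the finiteness of all $\alpha(\Lambda)$-orbits on $\Gamma/\Lambda$, after which the containment in a product of finite symmetric groups together with Tychonoff's theorem finishes it. The remaining ingredients — that the fixators are open subgroups, continuity of the group operations on $\Sym(\Gamma/\Lambda)$, and the net arguments identifying closures — are routine.
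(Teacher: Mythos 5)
Your argument is correct and complete; the paper itself gives no proof of this proposition, only the citation to \cite{shalom+willis}*{Section~3}, and your reasoning (fixators as a basis of open subgroups, the normal core computation for the kernel, identification of $\overline{\alpha(\Lambda)}$ with the point stabilizer of $e\Lambda$ in $G$, and compactness via finiteness of all $\alpha(\Lambda)$-orbits coming from commensuration plus Tychonoff) is exactly the standard route taken in that reference. No gaps.
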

For the proof of Theorem~\ref{thm: compactly presented} we use the following easy fact about the Schlichting completion. 

\begin{lemma}[\cite{boudec}*{Lemmas~6.3 and~6.4}]\label{lem: Le Boudec}
The following holds for the Schlichting completion. 
\begin{enumerate}
\item 	    $\Gamma\schlichting\Lambda=\overline{\alpha(\Lambda)}\alpha(\Gamma)$;
\item     $\overline{\alpha(\Lambda)}\cap \alpha(\Gamma)=\alpha(\Lambda)$.
\end{enumerate}
\end{lemma}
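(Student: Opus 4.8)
The plan is to identify $\overline{\alpha(\Lambda)}$ with the stabilizer in $G$ of the base coset $e\Lambda\in\Gamma/\Lambda$, and then read both assertions off this description together with the transitivity of the $\Gamma$-action on $\Gamma/\Lambda$. So first I would set $H\defq\Stab_G(e\Lambda)=G\cap\{\sigma\in\Sym(\Gamma/\Lambda):\sigma(e\Lambda)=e\Lambda\}$. Since a point stabilizer is open (and closed) in the topology of pointwise convergence, $H$ is an open subgroup of~$G$. Evidently $\alpha(\Lambda)\subseteq H$, hence $\overline{\alpha(\Lambda)}\subseteq H$. For the reverse inclusion, let $g\in H$ and pick a net $(\gamma_i)$ in $\Gamma$ with $\alpha(\gamma_i)\to g$; evaluating at the coset $e\Lambda$ shows $\gamma_i\Lambda=g(e\Lambda)=e\Lambda$ for $i$ large, i.e.\ $\gamma_i\in\Lambda$ eventually, so $g\in\overline{\alpha(\Lambda)}$. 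This gives $\overline{\alpha(\Lambda)}=H=\Stab_G(e\Lambda)$, which is the one technical point; everything else is formal.

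For part~(2) I would then compute $\overline{\alpha(\Lambda)}\cap\alpha(\Gamma)=H\cap\alpha(\Gamma)=\{\alpha(\gamma):\gamma\in\Gamma,\ \gamma\Lambda=\Lambda\}=\alpha(\Lambda)$, using that $\alpha(\gamma)(e\Lambda)=\gamma\Lambda$.

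For part~(1), the inclusion $\overline{\alpha(\Lambda)}\,\alpha(\Gamma)\subseteq G$ is immediate because $G$ is a group containing both factors. Conversely, given $g\in G$, the subgroup $\alpha(\Gamma)$ already acts transitively on $\Gamma/\Lambda$, so there is $\gamma\in\Gamma$ with $\alpha(\gamma)(e\Lambda)=g(e\Lambda)$; then $\alpha(\gamma)^{-1}g$ fixes $e\Lambda$, hence lies in $\Stab_G(e\Lambda)=\overline{\alpha(\Lambda)}$, so $g\in\alpha(\Gamma)\,\overline{\alpha(\Lambda)}$. Passing to inverses and using that $\overline{\alpha(\Lambda)}$, $\alpha(\Gamma)$ and $G$ are each closed under inversion converts $G=\alpha(\Gamma)\,\overline{\alpha(\Lambda)}$ into the stated form $G=\overline{\alpha(\Lambda)}\,\alpha(\Gamma)$.

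I do not expect a serious obstacle here: the only step requiring care is the net argument showing $\Stab_G(e\Lambda)\subseteq\overline{\alpha(\Lambda)}$, namely that approximating $g$ in the pointwise topology forces the approximating group elements to eventually fix the base coset. It is also worth noting that commensuratedness of $\Lambda$ is not used in this lemma — it is only needed elsewhere to ensure $\overline{\alpha(\Lambda)}$ is compact — so the argument above applies verbatim to any subgroup $\Lambda<\Gamma$.
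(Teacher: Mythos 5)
Your argument is correct, and the key step -- identifying $\overline{\alpha(\Lambda)}$ with the open stabilizer $\Stab_G(e\Lambda)$ and deducing both identities from transitivity of $\Gamma$ on $\Gamma/\Lambda$ -- is exactly how this is proved in the cited sources (Le Boudec; Shalom--Willis, Section~3); the paper itself only cites the result and gives no proof. Your closing observation is also accurate: commensuration plays no role in these two identities and is only needed to make $\overline{\alpha(\Lambda)}$ compact, hence $G$ locally compact.
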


An easy consequence is that $\alpha$ induces an isomorphism $\Gamma/\Lambda\xrightarrow{\cong} G/U$ where $G=\Gamma\schlichting\Lambda$ and $U=\overline{\alpha(\Lambda)}$. This fact will be used frequently. 

%\subsection{Compact presentability of Schlichting completions}~

\begin{theorem}\label{thm: compactly presented}
    Let $G=\Gamma \schlichting\Lambda$ be the Schlichting completion of $\Gamma $ relative to the commensurated subgroup $\Lambda < \Gamma$. 
    \begin{enumerate}
        \item If $G$ is compactly generated and $\Lambda$ is finitely generated, then $\Gamma$ is finitely generated. 
        \item If $G$ is compactly presented and $\Lambda$ is finitely presented, then $\Gamma$ is finitely presented. 
        \item If $\Gamma$ is finitely generated, then $G$ is compactly generated. 
        \item If $\Gamma$ is finitely presented and $\Lambda$ is finitely generated, then $G$ is compactly presented~\cite{boudec}*{Theorem~6.1}. 
    \end{enumerate}
\end{theorem}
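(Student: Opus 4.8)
The plan is to exploit the exact sequence of topological groups
\[ 1 \to U \to G \to \Gamma/\Lambda \to 1, \]
where $U=\overline{\alpha(\Lambda)}$ is a compact open subgroup of $G$ (by the Proposition) and the quotient map is induced by $\alpha$ (by the remark following Lemma~\ref{lem: Le Boudec}). This is the tdlc analog of the standard discrete fact that finiteness properties behave well in group extensions, and the proof of each item reduces to a Schreier-type rewriting argument combined with the identification $\Gamma\schlichting\Lambda=\overline{\alpha(\Lambda)}\alpha(\Gamma)$ from Lemma~\ref{lem: Le Boudec}(1).

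For item~(3): if $\Gamma=\langle S\rangle$ with $S$ finite, then $\alpha(S)$ together with $U$ generates $G$, since $\alpha(S)$ generates $\alpha(\Gamma)$ and $G=U\alpha(\Gamma)$; as $U$ is compact, $\alpha(S)\cup U$ is a compact generating set. For item~(1): given a compact generating set of $G$, one may enlarge it to $U\cup \alpha(T)$ for a finite set $T\subseteq\Gamma$ (using that $U$ is open, so a compact set is covered by finitely many $U$-cosets, each meeting $\alpha(\Gamma)$); then every element of $U$ is a product of a generator from $\alpha(\Lambda)$-generators — here is where finite generation of $\Lambda$ enters, furnishing a finite generating set $F\subseteq\Lambda$ of $\alpha(\Lambda)$, hence topologically of $U$ but in fact $U\cap\alpha(\Gamma)=\alpha(\Lambda)$ lets us stay inside $\alpha(\Gamma)$ — and one concludes that $\alpha(T\cup F)$ generates $\alpha(\Gamma)$, whence $T\cup F$ generates $\Gamma$ up to the (central-in-$\Gamma$) kernel, which is contained in $\Lambda=\langle F\rangle$.

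For items~(2) and~(4), the mechanism is the same but one level up: a compact presentation of $G$ is, by a theorem of Abels or the standard dictionary (see \cite{boudec} for item~(4)), encoded by the relations among $U\cup\alpha(T)$ consisting of (a) the relations of the compact group $U$, (b) the conjugation relations $u\alpha(t)u^{-1}$ expressed back in the generators, and (c) finitely many relations coming from a presentation of the quotient $\Gamma/\Lambda\cong G/U$. A presentation of $\Gamma$ is then assembled from a finite presentation of $\Lambda$ (giving the relations "inside $U$" after intersecting with $\alpha(\Gamma)$), the finitely many conjugation and quotient relations read off from the compact presentation of $G$, and one checks via a van~Kampen / Tietze argument that this finite set of relations suffices. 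Conversely, for item~(4), feeding a finite presentation of $\Gamma$ and a finite generating set of $\Lambda$ into this extension picture yields a compact presentation of $G$, which is exactly \cite{boudec}*{Theorem~6.1}.

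The main obstacle I anticipate is bookkeeping the kernel of $\alpha$: it is the largest normal-in-$\Gamma$ subgroup contained in $\Lambda$, so statements like "$\alpha(T\cup F)$ generates $\alpha(\Gamma)$ implies $T\cup F$ generates $\Gamma$" require the observation that the kernel sits inside $\Lambda$ and hence is already accounted for by a generating/presenting set of $\Lambda$; one must be careful that a \emph{finite} presentation of $\Lambda$ still controls $\ker\alpha$ (it does, because $\ker\alpha\triangleleft\Lambda$ and adding the finitely many $\Gamma$-conjugation relations normally generates it). The second delicate point is the precise form of "compactly presented" for tdlc groups and the corresponding Reidemeister–Schreier lemma for the open compact subgroup $U$; I would state this as a lemma (the relations of $G$ relative to $U\cup\alpha(T)$ are generated by the relations of $U$, the conjugation relations, and lifts of the relations of $G/U$) and then the rest is formal.
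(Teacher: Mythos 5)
Your strategy rests on treating $1 \to U \to G \to \Gamma/\Lambda \to 1$ as an extension of topological groups, and this is where the argument breaks: $\Lambda$ is only \emph{commensurated} in $\Gamma$, not normal, so $U=\overline{\alpha(\Lambda)}$ is not normal in $G$ and $\Gamma/\Lambda\cong G/U$ is merely a coset space, not a group. (In the normal case $G=\Gamma/\Lambda$ is discrete and the theorem is classical.) Consequently the presentation recipe you invoke for items (2) and (4) --- relations of the ``kernel'' $U$, conjugation relations, and lifts of relations of the ``quotient'' $\Gamma/\Lambda$ --- has no meaning here: there is no presentation of $\Gamma/\Lambda$ to lift, and the van Kampen/Tietze bookkeeping for extensions does not apply. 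This is a missing idea, not a detail. Item (3) is fine and is exactly the paper's one-line argument. Item (1) can in fact be salvaged by a genuine Schreier rewriting (this is close to an argument the authors drafted and discarded): choosing coset representatives $g_i=\alpha(\delta_i)v_i$ along a word in $U\cup\alpha(T)$, the increments land in $U\cap\alpha(\Gamma)=\alpha(\Lambda)$ or in $U\alpha(t)U\cap\alpha(\Gamma)$, and one must check that the latter equals $\alpha(\Lambda t\Lambda)$ (using density of $\alpha(\Lambda)$ in $U$ together with Lemma~\ref{lem: Le Boudec}(2)); your phrase ``one concludes that $\alpha(T\cup F)$ generates $\alpha(\Gamma)$'' hides precisely this step. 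Also, $\ker\alpha$ is normal in $\Gamma$, not central, though since it is contained in $\Lambda$ this does not harm (1).

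The paper's actual route for (1) and (2) avoids all of this by going through equivariant topology: compact generation (resp.\ presentation) of $G$ is equivalent to type $F_1$ (resp.\ $F_2$), witnessed by a contractible proper smooth $G$-CW-complex $X$ with cocompact $1$- (resp.\ $2$-)skeleton; restricting the action to $\Gamma$, the stabilizers are commensurable with $\Lambda$ by Lemma~\ref{lem: Le Boudec}, hence finitely generated (resp.\ presented); one then concludes by the Schwarz--Milnor lemma, resp.\ Brown's finiteness criterion \cite{brown-finiteness}*{Proposition~3.1} for cocompact actions on simply connected complexes. Some such criterion for non-free actions is unavoidable for item (2); if you want a combinatorial proof you should formulate and prove the analogue of your ``Reidemeister--Schreier lemma'' for the $\Gamma$-action on the Cayley--Abels graph/complex of $G$, which is essentially Brown's result.
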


Before we prove this, we consider 
the following two propositions which show that Theorem~\ref{thm: compactly presented} implies Theorem~\ref{thm: finiteness for discrete group} part (2) and Theorem~\ref{thm: finiteness for tdlc group} part (2) for the cases $n=1,2$. For the notion of type $F_n$ see Definition~\ref{def: finiteness properties}. 

\begin{proposition}[\cite{geoghegan}*{Proposition 7.2.1}]\label{prop: compactly gen via CW}
Let $G$ be a discrete group. Then the following equivalences hold. 
\begin{enumerate}
    \item $G$ is of type $F_1$ if and only if $G$ is finitely generated. 
    \item $G$ is of type $F_2$ if and only if $G$ is finitely presented.
\end{enumerate}
\end{proposition}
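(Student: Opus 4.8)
The plan is to reduce everything to the standard topological characterisation of the finiteness properties: a discrete group $G$ has type $F_n$ exactly when it admits a classifying space $BG$ (an aspherical CW-complex, i.e.\ a $K(G,1)$) with finite $n$-skeleton. Granting this, both equivalences follow from the elementary relationship between the $1$- and $2$-skeleta of a CW-complex and the presentation of its fundamental group, together with the classical procedure of attaching cells of dimension $\ge 3$ to turn a low-dimensional complex into a $K(G,1)$ without disturbing its skeleta in low degrees.

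For part (1), in the forward direction I would take a $BG$ with finite $1$-skeleton $X^{(1)}$, which is a finite connected graph; cellular approximation of loops shows that the inclusion $X^{(1)}\hookrightarrow BG$ induces a surjection $\pi_1(X^{(1)})\twoheadrightarrow \pi_1(BG)=G$, and since $\pi_1$ of a finite graph is free of finite rank, $G$ is finitely generated. Conversely, given a finite generating set $S$, I would start from $Y_1=\bigvee_{s\in S}S^1$, whose fundamental group is free on $S$ and surjects onto $G$, attach $2$-cells along a (possibly infinite) set of normal generators of the kernel to obtain $Y_2$ with $\pi_1(Y_2)\cong G$, and then inductively attach cells of dimension $\ge 3$ to kill the higher homotopy groups, producing a $K(G,1)$ whose $1$-skeleton is the finite graph $Y_1$; hence $G$ has type $F_1$.

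Part (2) is entirely parallel. If $G$ has type $F_2$, then its $1$-skeleton is finite, so $G$ is finitely generated by (1); moreover the finite $2$-skeleton $X^{(2)}$ satisfies $\pi_1(X^{(2)})=\pi_1(BG)=G$, and choosing a spanning tree of $X^{(1)}$ exhibits $G$ on finitely many generators (the remaining edges) subject to finitely many relators (the attaching maps of the $2$-cells), so $G$ is finitely presented. Conversely, from a finite presentation one forms the finite presentation $2$-complex $Y_2$ with $\pi_1(Y_2)\cong G$, and again attaches cells of dimension $\ge 3$ to reach a $BG$ with finite $2$-skeleton, so $G$ has type $F_2$.

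The one step needing genuine care is the ``kill the higher homotopy groups'' construction: one argues by induction on $k\ge 2$, at each stage using the Hurewicz theorem (once $\pi_2,\dots,\pi_{k-1}$ have been killed, $\pi_k$ is the first nonvanishing obstruction) to choose maps $S^k\to Y_k$ whose classes generate $\pi_k(Y_k)$, attaching $(k+1)$-cells along them, and passing to the colimit; a compactness argument then shows the colimit is aspherical. That these higher-dimensional attachments may be infinite in number is harmless, since only the $n$-skeleton with $n=1,2$ is required to be finite. All of this is the content of \cite{geoghegan}*{Proposition~7.2.1} (see also Brown's book \cite{brown}), so in this paper the proposition is simply quoted rather than reproved.
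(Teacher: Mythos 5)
The paper offers no proof of this proposition at all---it is quoted directly from \cite{geoghegan}*{Proposition~7.2.1}---so the only meaningful comparison is with the standard argument in that reference, and your sketch is essentially that argument: finite $1$-skeleton of a $K(G,1)$ gives a finitely generated free group surjecting onto $G$; a finite $2$-skeleton gives a finite presentation via a spanning tree; conversely one builds the presentation complex and kills higher homotopy by attaching cells of dimension $\ge 3$, which does not disturb the low skeleta. All of that is correct as a proof of Geoghegan's statement.

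The one genuine issue is your opening reduction, where you ``grant'' that type $F_n$ means having a $K(G,1)$ with finite $n$-skeleton. The notion of $F_n$ in force here is Definition~\ref{def: finiteness properties} specialized to discrete groups: a contractible \emph{proper} smooth $G$-CW-complex with cocompact $n$-skeleton, i.e.\ the stabilizers are only required to be finite, not trivial. Your converse directions are unaffected (the universal cover of the presentation complex is a free, hence proper, cocompact-in-low-degrees action), but your forward directions silently assume the witnessing action is free, so that the quotient is a $BG$ and its fundamental group is $G$; with finite stabilizers neither holds. Bridging this requires exactly the extra input the paper invokes in its proof of Theorem~\ref{thm: compactly presented}: a cocompact action with finitely generated (here finite) stabilizers on a connected complex gives finite generation by the Schwarz--Milnor lemma, and the $F_2$ case needs Brown's finiteness criterion \cite{brown-finiteness}*{Proposition~3.1}. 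This is a standard and entirely fixable step, but as written it is the nontrivial half of the forward implications and should not be absorbed into the phrase ``granting this.''
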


The analogous proposition can be formulated for tdlc groups. 

\begin{proposition}[\cite{ilaria+ged}*{Proposition 3.4}]\label{prop: compactly gen via G-CW}
    Let $G$ be a tdlc group. Then the following equivalences hold. 
\begin{enumerate}
    \item $G$ is of type $F_1$ if and only if $G$ is compactly generated. 
    \item $G$ is of type $F_2$ if and only if $G$ is compactly presented.
\end{enumerate}
\end{proposition}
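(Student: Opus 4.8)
The plan is to transport the classical arguments behind Proposition~\ref{prop: compactly gen via CW} to the equivariant setting, replacing Cayley graphs by Cayley--Abels graphs and ordinary $K(G,1)$'s by models of the classifying space $\underline{E}G$ for the family of compact open subgroups of $G$. Recall that $G$ has type $F_n$ exactly when some model of $\underline{E}G$ can be realised as a $G$-CW-complex with compact open cell stabilizers and cocompact $n$-skeleton (finitely many $G$-orbits of cells in each dimension $\le n$); a model with compact open stabilizers always exists once the cocompactness requirement is dropped, and every tdlc group contains a compact open subgroup by van Dantzig's theorem. I would first isolate two auxiliary facts. \textbf{(A)} Any $G$-CW-complex $Y$ with compact open cell stabilizers can be enlarged, by equivariantly attaching $G$-orbits of cells of the form $G/V\times D^k$ with $V<G$ compact open, to a model of $\underline{E}G$; moreover the attached cells can be taken in dimensions $k\ge m+1$, so that the $m$-skeleton of $Y$ is left untouched. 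This is the equivariant ``kill the higher homotopy'' argument, performed on $H$-fixed point sets for all compact open $H$ simultaneously. \textbf{(B)} Connected cocompact $1$-dimensional $G$-CW-complexes with compact open stabilizers are, up to $G$-homotopy, exactly the Cayley--Abels graphs of $G$: given a compact open $U<G$ and a finite symmetric $S\subseteq G$ with $\langle S\cup U\rangle=G$ one has the (connected, locally finite) graph with vertex set $G/U$ and $G$-orbits of edges indexed by $S$, and conversely the vertex stabilizers of such a complex are compact open and, together with finitely many edge-transporting elements, generate $G$.

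For part~(1): if $G$ is compactly generated, choose $U$ and $S$ as in~\textbf{(B)} and let $X^{(1)}$ be the corresponding Cayley--Abels graph; it is connected, $G$ acts with compact open stabilizers and finitely many cell orbits, and by~\textbf{(A)} it extends to a model of $\underline{E}G$ with cocompact $1$-skeleton, so $G$ has type $F_1$. Conversely, if $X$ is such a model, then $X^{(1)}$ is a connected cocompact $1$-dimensional $G$-CW-complex with compact open stabilizers (connectedness passes from the contractible $X$ to $X^{(1)}$), so by~\textbf{(B)} $G$ is generated by a compact open subgroup together with finitely many elements, hence compactly generated.

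For part~(2): if $G$ is compactly presented, it is compactly generated, so fix a Cayley--Abels graph $X^{(1)}$ as above. Compact presentation of $G$ is equivalent to the existence of $\ell\ge 1$ such that $\pi_1(X^{(1)})$ is generated by the classes of combinatorial loops of length at most $\ell$; since $X^{(1)}$ is locally finite and carries a cocompact $G$-action there are only finitely many $G$-orbits of such loops, so attaching one $2$-cell equivariantly along a representative of each orbit produces a simply connected cocompact $2$-dimensional $G$-CW-complex $X^{(2)}$ with compact open stabilizers. By~\textbf{(A)} it extends to a model of $\underline{E}G$ with cocompact $2$-skeleton, so $G$ has type $F_2$. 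Conversely, from a model $X$ of $\underline{E}G$ with cocompact $2$-skeleton one reads generators off $X^{(1)}$ as in part~(1) and relators off representatives of the finitely many $G$-orbits of $2$-cells; since $X^{(2)}$ is simply connected (as $X$ is), these data form a compact presentation of $G$.

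The main obstacle is making~\textbf{(A)} and the ``compact presentation'' dictionary precise. In~\textbf{(A)} one must verify that the equivariant obstruction theory for the family of compact open subgroups behaves as in the discrete case: attaching orbits $G/V\times D^k$ with $V$ compact open does kill the relevant relative homotopy, and the enlarged space still satisfies $X^H\simeq\ast$ for every compact open $H$ and $X^H=\emptyset$ otherwise, even though each such attachment alters the various fixed-point sets $X^H$. For the dictionary one must match the topological-group notion of compact presentation (a compact generating set all of whose relations are consequences of relations of bounded length) with finite presentability of $G$ relative to a compact open subgroup, and then the latter with ``finitely many $G$-orbits of $2$-cells''; here it is the passage from bounded relator length to \emph{finitely many} orbits that requires care, and it is precisely local finiteness of the Cayley--Abels graph together with cocompactness of the action that supplies it.
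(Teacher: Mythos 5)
First, a point of calibration: the paper does not prove this proposition at all --- it is quoted from Castellano--Corob Cook (\cite{ilaria+ged}, Proposition~3.4) --- so there is no internal proof to compare against. Your outline is the standard geometric argument and is essentially the one behind the cited result: Cayley--Abels graphs for compact generation, attaching $2$-cells along the finitely many $G$-orbits of short combinatorial loops for compact presentation, and an equivariant cell-attachment step to complete a cocompact low skeleton to a contractible proper smooth $G$-CW-complex. Your item \textbf{(B)} and the two dictionaries are sound, granted the care you yourself flag about local finiteness and about ``generated by'' meaning generated by short loops conjugated back to the basepoint.

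The one genuine defect is your auxiliary claim \textbf{(A)} as stated. Definition~\ref{def: finiteness properties} asks for a \emph{contractible} proper smooth $G$-CW-complex with cocompact $n$-skeleton, not for a model of the classifying space for the family of compact open subgroups, and these are not interchangeable. Your claim that any $Y$ with compact open stabilizers can be completed to such a classifying space by attaching cells only in dimensions $\ge m+1$ is false in general: if $H$ is a compact open subgroup not subconjugate to any cell stabilizer of $Y$, then $Y^H=\emptyset$, and making $Y^H$ nonempty (let alone contractible) forces you to attach a $0$-cell $G/V$ with $H\le V$, destroying the $0$-skeleton. What is true, and is all you need, is the weaker statement targeting contractibility of the total space: if $Y$ is $(m-1)$-connected (you arrange connectedness for $m=1$ and simple connectedness for $m=2$ before invoking \textbf{(A)}), one kills $\pi_m(Y), \pi_{m+1}(Y),\dots$ by attaching orbits $G/V\times D^{k}$ with $k\ge m+1$. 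The equivariance is available because any compact subset of a smooth $G$-CW-complex lies in a finite subcomplex, whose finitely many open pointwise stabilizers intersected with a van Dantzig compact open subgroup yield a compact open $V$ fixing it pointwise; hence every homotopy class is represented inside some $Y^V$ and can be killed by an equivariant attachment with proper isotropy. With \textbf{(A)} corrected to produce a contractible complex rather than a classifying space for the family, the rest of your argument goes through.
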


\begin{proof}[Proof of Theorem~\ref{thm: compactly presented}]
The proof of (3) immediate since the union of a generating set of $\Gamma$ and a compact-open subgroup of $G$ is a generating set of~$G$. As indicated in the statement, (4) is proved by Le Boudec. 

Next we prove (1) and (2). 
If $G$ is compactly generated or compactly presented, then $G$ is of type $F_1$ or $F_2$, respectively. 
Being of type $F_1$ or $F_2$ is witnessed by a contractible proper smooth $G$-CW complex $X$ with cocompact $1$-skeleton or $2$-skeleton. 
Since the stabilizers of $G$ are compact-open, they are commensurable with $\alpha(\Lambda)$. It follows from  Lemma~\ref{lem: Le Boudec} that the stabilizers of the restricted $\Gamma$-action on~$X$ are commensurable with $\Lambda$. 
In the first case the stabilizers of the $\Gamma$-action are finitely generated, in the second case they are finitely presented. 
Therefore, $\Gamma$ is finitely generated by a special case of the Schwarz-Milnor lemma or finitely presented by a theorem of Brown~\cite{brown-finiteness}*{Proposition~3.1}, respectively. 
\end{proof}

\section{Finiteness properties of the Schlichting completion}\label{sec: finiteness}
Finiteness properties of tdlc groups over $\bbQ$ were introduced and studied by Castellano-Weigel~\cite{castellano+weigel}. Castellano-Corob Cook developed a theory of finiteness properties of tdlc groups over an arbitrary commutative ground ring~\cite{ilaria+ged} which we briefly review first. 

A natural setting for the homological algebra of tdlc groups is the category $\discat{R}$ of \emph{discrete $R[G]$-modules}, that is, of $R$-modules equipped with a left action of $G$ such that the stabilizer of each element is open. A discrete $R[G]$-module of the form $R[\Omega]$ where $\Omega$ is a discrete set with a continuous $G$-action is called a \emph{discrete permutation $R[G]$-module}. This means that the continuous $G$-action on $\Omega$ has open stabilizers. If the stabilizers are also compact, then $R[\Omega]$ is \emph{proper}. 

The category $\discat{R}$ is an abelian category that has enough injectives. If $\bbQ\subset R$, then $\discat{R}$ also has enough projectives, and every proper discrete permutation $R[G]$-module is projective. For $R=\bbZ$ this is no longer true in general. For any $R$, the category $\discat{R}$ embeds into a quasi-abelian category $\topcat{R}$ that has enough projectives. Although proper discrete permutation $R[G]$-modules are not necessarily projective for arbitrary rings $R$ we still have the equivalence (2) in  Theorem~\ref{thm: relations between different finiteness conditions}. As a consequence, a reader of this paper does not really have to know what $\topcat{R}$ and $\KP_n^R$ are and can just work with the more intuitive notions $\discat{R}$ and $\FP_n^R$. However, the reason that the notion of $\FP_n^R$ works well, as in e.g.~Proposition~\ref{prop: finiteness of short exact sequences}, is that there is the quasi-abelian category $\topcat{R}$ in the background.

\begin{definition}\label{def: finiteness properties}
Let $R$ be a commutative ring and $n\in\bbN\cup\{\infty\}$. We say that a tdlc group~$G$ has 
\begin{enumerate}
\item type $\Finite_n$ if there is a contractible proper smooth $G$-CW-complex with cocompact $n$-skeleton;
\item type $\FP_n^R$ if the trivial $R[G]$-module $R$ has a resolution $P_\ast\to R$ by proper discrete permutation $R[G]$-modules $P_\ast$ such that $P_0,\dots,P_n$ are finitely generated; 
\item type $\KP_n^R$ if the trivial $R[G]$-module $R$ has a projective resolution $P_\ast$ in the category $\topcat{R}$ such that $P_0,\dots, P_n$ are compactly generated. 
\end{enumerate}
\end{definition}

For the definition of $G$-CW-complexes see~\cite{tomdieck}. A $G$-CW-complex is proper or smooth if all its stabilizers are compact or open subgroups, respectively. 

\begin{theorem}[Castellano-Corob Cook]\label{thm: relations between different finiteness conditions}
    Let $G$ be a tdlc group. Let $R$ be a commutative ring and $n\in\bbN\cup\{\infty\}$. 
    Then the following holds. 
    \begin{enumerate}
        \item If $G$ is compactly presented and $G$ has type $\FP_n^\bbZ$ then $G$ has type $\Finite_n$~\cite{ilaria+ged}*{Proposition~3.13}. 
        \item The group $G$ has type $\FP_n^R$ if and only if $G$ has type $\KP^R_n$~\cite{ilaria+ged}*{Theorem~3.10}. 
    \end{enumerate}
\end{theorem}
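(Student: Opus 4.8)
The plan is to treat this statement as the totally disconnected, integral‑coefficient upgrade of two classical facts: Bieri's homological characterisation of type $\FP_n$ (part~(2)) and Wall's theorem that type $F_2$ together with type $\FP_n$ yields type $F_n$ (part~(1)). In both cases the skeleton of the argument is the familiar one from Brown's book; the genuinely new content is the homological algebra of the categories $\discat{R}$ and $\topcat{R}$. I would prove (2) first and then feed its technique into (1).

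For (2): the implication $\FP_n^R\Rightarrow\KP_n^R$ is the easy direction. A proper discrete permutation $R[G]$-module is projective in $\topcat{R}$ (recorded in the discussion preceding Definition~\ref{def: finiteness properties}) and a finitely generated one is compactly generated, so an $\FP_n^R$-resolution, viewed in $\topcat{R}$, is already a $\KP_n^R$-resolution. For the converse I would first note that $\discat{R}$ has \emph{enough} proper discrete permutation modules: given a discrete $R[G]$-module $M$ and $m\in M$ with open stabiliser $S_m$, choose a compact open $V_m\le S_m$ (possible in any tdlc group); then $gV_m\mapsto gm$ defines a $G$-map $R[G/V_m]\to M$, and summing over $m\in M$ yields an epimorphism from a proper discrete permutation module. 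With this I would run the standard syzygy induction: starting from a $\KP_n^R$-resolution $Q_\ast\to R$, build by downward induction a partial resolution $P_n\to\cdots\to P_0\to R$ by \emph{finitely generated} proper discrete permutation modules, comparing the current syzygy with that of $Q_\ast$ via the generalised Schanuel lemma in the quasi-abelian category $\topcat{R}$ to conclude that the syzygy is compactly generated and hence, applied to a finite generating set, receives an epimorphism from a finitely generated proper discrete permutation module. Finally, extend $P_n\to\cdots\to P_0\to R$ to the right by arbitrary proper discrete permutation modules, again using that $\discat{R}$ has enough of them; this is the desired $\FP_n^R$-resolution.

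For (1): by Proposition~\ref{prop: compactly gen via G-CW} the hypothesis "$G$ compactly presented" says $G$ has type $\Finite_2$, so there is a contractible proper smooth $G$-CW-complex $W$ with cocompact $2$-skeleton; also fix any contractible proper smooth $G$-CW-complex $Y$ (a classifying space for the family of compact open subgroups, which always exists). I would then run the tdlc version of Wall's thickening argument. Starting from $X_2:=W^{(2)}$ with the $G$-map $f_2\colon X_2\hookrightarrow W\simeq Y$, inductively build, for $2\le k<n$, a proper smooth $G$-CW-complex $X_{k+1}\supseteq X_k$ obtained by equivariantly attaching finitely many $G$-orbits of $(k+1)$-cells with compact open stabilisers, together with an extension $f_{k+1}\colon X_{k+1}\to Y$ that is one degree more connected than $f_k$. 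The crucial finiteness input is that the cellular chain complex $C_\ast(X_k)$ is a complex of proper discrete permutation $R[G]$-modules, finitely generated through degree $k$; comparing it with an $\FP_n^{\ZZ}$-resolution of $\ZZ$ by a generalised Schanuel argument shows that the module of obstruction classes one must kill is finitely generated over the orbit category of $G$, so only finitely many $G$-orbits of $(k+1)$-cells are needed. The passage from these homological obstruction classes to genuine equivariant attaching maps of cells uses an equivariant Hurewicz argument, which is where the $\Finite_2$ hypothesis — control of $\pi_1$ — enters. After the step producing $X_n$ (cocompact $n$-skeleton, $f_n$ an $n$-connected map to the contractible $Y$), attach further, now arbitrarily many, $G$-orbits of cells in dimensions $>n$ to upgrade $f_n$ to a weak equivalence; by Whitehead the resulting complex is contractible, it is proper and smooth, and its $n$-skeleton is cocompact, so $G$ has type $\Finite_n$.

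The main obstacle in both parts has the same flavour. In (2) it is the homological bookkeeping in the quasi-abelian category $\topcat{R}$: establishing the generalised Schanuel lemma and the stability of the property "compactly generated" under kernels of epimorphisms between compactly generated projectives, under extensions, and under retracts, together with the "enough finitely generated proper discrete permutation modules" statement. In (1) this same algebra must be matched to the topology — recognising $C_\ast(X)$ as a complex of proper discrete permutation modules, re-running the Schanuel comparison against the $\FP_n^{\ZZ}$-data to count orbits of cells, and, the genuinely topological point, converting homological obstruction classes into equivariant attaching maps of cells with compact stabilisers, which is exactly why one must start from a model witnessing $\Finite_2$. Everything else is a faithful transcription of the discrete arguments.
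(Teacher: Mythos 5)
This theorem is not proved in the paper at all: both parts are imported verbatim from Castellano--Corob Cook, with the proofs delegated to \cite{ilaria+ged}*{Proposition~3.13} and \cite{ilaria+ged}*{Theorem~3.10}. So there is no in-paper argument to compare your proposal against; what you have written is a reconstruction of the cited results, and it should be judged as such.

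As an outline your reconstruction is of the right general shape --- a Schanuel-type syzygy induction for part~(2) and a Wall/Brown thickening argument, started from a model witnessing $\Finite_2$, for part~(1). But there is one concrete point where your argument assumes exactly what makes part~(2) a theorem rather than an observation. You dispose of the implication $\FP_n^R\Rightarrow\KP_n^R$ by asserting that every proper discrete permutation $R[G]$-module is projective in $\topcat{R}$, and you attribute this to ``the discussion preceding Definition~\ref{def: finiteness properties}.'' That discussion says no such thing: it states that these modules are projective in $\discat{R}$ when $\bbQ\subset R$, that this fails for $R=\bbZ$, and then explicitly emphasizes that ``although proper discrete permutation $R[G]$-modules are not necessarily projective for arbitrary rings $R$ we still have the equivalence~(2).'' In other words, the authors flag the failure of this projectivity as the reason the equivalence is nontrivial. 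If $R[G/U]$ is not projective in $\topcat{R}$, a finite-type permutation resolution is not a projective resolution there, and the ``easy direction'' requires the same comparison machinery (or a functorial Bieri--Eckmann-type criterion, which is how \cite{ilaria+ged} actually organizes the proof) as the converse. Your sketch of the converse, and of part~(1), likewise leans on a generalised Schanuel lemma in the quasi-abelian category $\topcat{R}$ and on the closure properties of ``compactly generated'' under kernels and extensions; these are precisely the technical results of \cite{ilaria+ged} that this paper takes as a black box, so citing them is legitimate, but they cannot be treated as ``familiar bookkeeping'' --- they are the content of the theorem.
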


\begin{lemma}\label{lem: restriction}
Let $G=\Gamma\schlichting\Lambda$ be the Schlichting completion of~$\Gamma$ relative to the commensurated subgroup $\Lambda<\Gamma$. Let $M=R[\Omega]$ be a finitely generated proper discrete permutation $G$-module over $R$. If $\Lambda$ is of type $\FP^R_n$, then $\res^G_\Gamma(M)$ has a projective $R[\Gamma]$-resolution $P_\ast\twoheadrightarrow M$ such that $P_0, \dots, P_n$ are finitely generated. If $\Lambda$ is locally finite and $\bbQ\subset R$, then $\res^G_\Gamma(M)$ is a flat $R[\Gamma]$-module. 
\end{lemma}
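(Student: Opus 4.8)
The plan is to reduce the statement to the case of a single $G$-orbit and then transport resolutions along the dense homomorphism $\alpha\colon\Gamma\to G$. Since $M=R[\Omega]$ is finitely generated, $\Omega$ consists of finitely many $G$-orbits, and properness forces each orbit to be a coset space $G/V$ with $V<G$ compact-open; because $\res^G_\Gamma$ commutes with finite direct sums, it suffices to treat $M=R[G/V]$. The first step is to describe $G/V$ as a $\Gamma$-set. As $\alpha(\Gamma)$ is dense and $V$ is open, the set $\alpha(\Gamma)V$ is open; it is also closed, its complement being a union of cosets of the open subgroup $V$; hence $\alpha(\Gamma)V=G$, so $\Gamma$ acts transitively on $G/V$ with the stabilizer of $eV$ equal to $\Lambda'\defq\alpha^{-1}(V)$, and therefore $\res^G_\Gamma R[G/V]\cong R[\Gamma/\Lambda']$ as $R[\Gamma]$-modules. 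Moreover $\Lambda'$ is commensurable with $\Lambda$: by Lemma~\ref{lem: Le Boudec} we have $\alpha^{-1}(U)=\Lambda$ for $U=\overline{\alpha(\Lambda)}$, any two compact-open subgroups of $G$ are commensurable, and taking preimages under $\alpha$ preserves commensurability. In particular $\Lambda'$ inherits type $\FP^R_n$ in the first case and local finiteness in the second.

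For the first assertion, I would use that type $\FP^R_n$ passes to finite-index subgroups and to finite-index overgroups (standard; see~\cite{brown}*{Chapter~VIII}), so that $\Lambda'$ has type $\FP^R_n$ and the trivial module $R$ admits a projective resolution $Q_\ast\twoheadrightarrow R$ over $R[\Lambda']$ with $Q_0,\dots,Q_n$ finitely generated. Applying the functor $R[\Gamma]\otimes_{R[\Lambda']}(-)$, which is exact because $R[\Gamma]$ is free as a right $R[\Lambda']$-module and which sends finitely generated projectives to finitely generated projectives, produces a projective $R[\Gamma]$-resolution of $R[\Gamma/\Lambda']=\Ind_{\Lambda'}^{\Gamma}R$ whose first $n+1$ terms are finitely generated. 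Forming the finite direct sum over the orbits of $\Omega$ then yields the required resolution of $\res^G_\Gamma(M)$.

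For the second assertion I would argue that, when $\bbQ\subset R$ and $\Lambda'$ is locally finite, the trivial module $R$ is flat over $R[\Lambda']$. Indeed, $\Lambda'$ is the directed union of its finite subgroups $F$; for each such $F$ the order $|F|$ is a unit in $R$, so $R[F]$ is semisimple by Maschke's theorem, $R$ is a projective $R[F]$-module, and $R[\Lambda'/F]=\Ind_F^{\Lambda'}R$ is a projective $R[\Lambda']$-module. Identifying the images of the various basis elements one checks that $R\cong\colim_F R[\Lambda'/F]$ as $R[\Lambda']$-modules, exhibiting $R$ as a filtered colimit of projective, hence flat, modules; thus $R$ is flat over $R[\Lambda']$. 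Since induction preserves flatness (again by freeness of $R[\Gamma]$ over $R[\Lambda']$), the module $R[\Gamma/\Lambda']$ is flat over $R[\Gamma]$, and a finite direct sum of flat modules is flat, so $\res^G_\Gamma(M)$ is flat.

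The routine part of this argument is the homological bookkeeping: commensurability-invariance of $\FP^R_n$, exactness and preservation of finite generation by induction, and Maschke's theorem combined with filtered colimits. The one step that genuinely relies on the structure of the Schlichting completion, and which I would carry out most carefully, is the identification of $\res^G_\Gamma R[G/V]$ with a transitive permutation module $R[\Gamma/\Lambda']$ for some $\Lambda'$ commensurable with $\Lambda$; this rests on the density of $\alpha(\Gamma)$ together with Lemma~\ref{lem: Le Boudec}. I do not expect any further serious obstacle.
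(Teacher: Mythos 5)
Your proposal is correct and follows essentially the same route as the paper: decompose $M$ into finitely many orbit modules $R[G/V]$, identify each restriction with $R[\Gamma/\alpha^{-1}(V)]$ via density and Lemma~\ref{lem: Le Boudec}, note that $\alpha^{-1}(V)$ is commensurable with $\Lambda$ and hence inherits $\FP^R_n$ (resp.\ local finiteness), and induce a resolution (resp.\ flatness) up to $R[\Gamma]$. The only cosmetic difference is that you prove the flatness of $R$ over the locally finite group ring directly via Maschke's theorem and a filtered colimit, where the paper simply cites Bieri's Proposition~4.12.
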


\begin{proof}
If $\Lambda$ has type $\FP^R_n$ then so does any subgroup of $\Gamma$ that is commensurated with $\Lambda$ by~\cite{brown}*{(5.1) Proposition on p.~197}. Let $\Lambda'<\Gamma$ commensurated with $\Lambda$. 
Let $Q_\ast\to R$ be a projective $R[\Lambda']$-resolution of the trivial module such that $Q_0, \dots, Q_n$ are finitely generated. 
Then $R[\Gamma]\otimes_{R[\Lambda']} Q_\ast$ is a projective resolution of $R[\Gamma/\Lambda']$ that is finitely generated in degrees $0,\dots, n$. 
Hence the $R[\Gamma]$-module $R[\Gamma/\Lambda']$ has type $\FP^R_n$. A finitely generated discrete permutation $G$-module $R[\Omega]$ is a finite sum of modules of the type $R[G/U]$ where $U<G$ is a compact-open subgroup. 
By Lemma~\ref{lem: Le Boudec} we have $G/U\cong \Gamma/\alpha^{-1}(U)$, and $\alpha^{-1}(U)$ is commensurable with $\alpha^{-1}(\overline{\alpha(\Lambda)})=\Lambda$. 
Therefore $\res^G_\Gamma R[G/U]$ is of type $\FP^R_n$. 

If $\Lambda$ and thus $\Lambda'$ are locally finite and $\bbQ\subset R$, then $R$ is a flat $R[\Lambda']$-module~\cite{bieri}*{Proposition~4.12~on~p.~63}. Therefore $R[\Gamma]\otimes_{R[\Lambda']} R=R[\Gamma/\Lambda']$ is a flat $R[\Gamma]$-module.  
\end{proof}

\begin{lemma}[\cite{brown2}*{Lemma~1.5}]\label{lem: double complex lemma}
Let $C_\ast$ be a chain complex over a ring. Let $P_\ast^{(i)}$ be a projective resolution of $C_i$. Then there is a chain complex $Q_\ast$ with $Q_n=\bigoplus_{i+j=n} P_i^{(j)}$ and a weak equivalence $Q_\ast\to C_\ast$. 
\end{lemma}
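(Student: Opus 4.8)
The plan is to realize $Q_\ast$ as a \emph{twisted total complex} of the (generally non-commuting) double complex assembled from the given columns $P_\ast^{(j)}$. Fix augmentations $\epsilon_j\colon P_\ast^{(j)}\twoheadrightarrow C_j$ and write $d^v$ for the collection of internal differentials. On the graded module $Q_n=\bigoplus_{i+j=n}P_i^{(j)}$ I would look for a differential of the form $D=d^v+d^{(1)}+d^{(2)}+\cdots$, where $d^{(k)}\colon P_i^{(j)}\to P_{i+k-1}^{(j-k)}$ lowers total degree by one and lowers the resolution index by~$k$. Assuming, as we may, that $C_\ast$ is bounded below, we have $P_\ast^{(j)}=0$ for $j<0$, so on each summand the sum defining $D$ is finite; the requirement $D^2=0$ unwinds into the system of Maurer--Cartan relations $\sum_{a+b=m}d^{(a)}\circ d^{(b)}=0$ (with the convention $d^{(0)}\defq d^v$), one for each $m\ge 0$.

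These relations I would solve by induction on~$m$. The case $m=0$ is $(d^v)^2=0$; for $m=1$ the relation says that $d^{(1)}$ is a chain map lifting the differential $C_j\to C_{j-1}$, which exists by the comparison theorem for projective resolutions. Assuming $d^{(0)},\dots,d^{(k-1)}$ are constructed and satisfy the relations for $m<k$, the obstruction to defining $d^{(k)}$ is the map $O_k\defq\sum_{0<a<k}d^{(a)}\circ d^{(k-a)}$; a formal computation with the lower relations (the only delicate point being signs, since all the $d^{(a)}$ have odd parity, so that the triple composites $d^{(a)}d^{(b)}d^{(c)}$ cancel in pairs) shows that $O_k$ is a cycle in the complex $\operatorname{Hom}^\bullet_R\!\big(P_\ast^{(j)},P_\ast^{(j-k)}\big)$, sitting in the degree where it raises internal degree by $k-2$. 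Since $P_\ast^{(j)}$ is a projective resolution of $C_j$ and $P_\ast^{(j-k)}$ is quasi-isomorphic to $C_{j-k}$, this Hom-complex has homology $\Ext^{-\ast}_R(C_j,C_{j-k})$ and hence vanishes in positive degrees; thus for $k\ge3$ the cycle $O_k$ is a boundary and we take $d^{(k)}$ to be minus a bounding cochain. For $k=2$ one argues directly that the class of $O_2=d^{(1)}\circ d^{(1)}$ in $H_0=\operatorname{Hom}_R(C_j,C_{j-2})$ is that of $d_C\circ d_C=0$, so again $O_2$ is a boundary. This produces $D$ with $D^2=0$.

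With $D$ in hand I would define $\epsilon\colon Q_\ast\to C_\ast$ to be $\epsilon_j$ on each summand $P_0^{(j)}$ and zero on $P_i^{(j)}$ for $i>0$; that $\epsilon$ is a chain map uses only $\epsilon_j\circ d^v=0$ on $P_1^{(j)}$ and $\epsilon_{j-1}\circ d^{(1)}=d_C\circ\epsilon_j$. To see that $\epsilon$ is a weak equivalence, filter $Q_\ast$ by the resolution index, $F_pQ_n=\bigoplus_{i+j=n,\,j\le p}P_i^{(j)}$; this filtration is $D$-stable and finite in each total degree, so the associated spectral sequence converges. Its $E^1$-term is the homology of the columns, namely $C_p$ concentrated in internal degree~$0$, with $d^1$ induced by $d^{(1)}$, which is exactly the differential of $C_\ast$; hence $E^2=H_\ast(C_\ast)$ and the sequence degenerates. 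Filtering $C_\ast$ compatibly (so that $\operatorname{gr}_pC_\ast=C_p$), the map $\epsilon$ induces an isomorphism on $E^2$, so by the comparison theorem for spectral sequences it is a quasi-isomorphism.

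The main obstacle is the inductive step: one must verify that each obstruction $O_k$ is genuinely a cycle and identify the Hom-complex whose higher acyclicity allows $O_k$ to be killed, and this is where the sign conventions have to be fixed carefully (so that $O_k=\pm\partial(d^{(k)})$ with the correct sign and the cubic terms cancel). Once the twisted differential is built, the construction of the augmentation and the convergence of the column filtration spectral sequence are routine.
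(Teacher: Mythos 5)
Your argument is correct and complete in outline: the twisted total complex $D=d^v+d^{(1)}+d^{(2)}+\cdots$, the inductive killing of the obstructions $O_k$ using the acyclicity of $\operatorname{Hom}^\bullet\bigl(P^{(j)},P^{(j-k)}\bigr)$ in positive degrees (with the $k=2$ case handled via $H_0\cong\operatorname{Hom}(C_j,C_{j-2})$ and $d_C\circ d_C=0$), the augmentation supported on the $P_0^{(j)}$, and the column filtration whose $E^1$-page recovers $(C_\ast,d_C)$ all fit together as you describe. Note, however, that the paper does not prove this lemma at all: it is quoted verbatim from Brown's article \emph{Complete Euler characteristics and fixed-point theory} (Lemma~1.5 there), so there is no in-paper proof to compare against. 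Brown's own argument reaches the same complex by a different induction, namely on the skeletal (brutal) truncations $\sigma_{\le p}C_\ast$: one lifts the attaching map $C_{p+1}[p]\to\sigma_{\le p}C_\ast$ through the already-constructed quasi-isomorphism $Q^{(p)}\to\sigma_{\le p}C_\ast$ (possible because $P^{(p+1)}$ is a bounded-below complex of projectives) and takes the mapping cone; the components of that lifted chain map are exactly your $d^{(k)}$ restricted to $P^{(p+1)}$, but $D^2=0$ is automatic for a cone, so the Maurer--Cartan bookkeeping and the cycle-verification for $O_k$ never have to be done by hand. Your route buys a global, basis-free description of the differential and makes the obstruction theory explicit; the mapping-cone route is shorter and sidesteps the sign-sensitive step you correctly flag as the delicate point. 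Either way the resulting $Q_\ast$ has the stated underlying modules and the stated weak equivalence, so the proposal is acceptable as a standalone proof of the cited lemma.
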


\begin{proof}[Proof of Theorem~\ref{thm: finiteness for discrete group}]
    We only need to prove part (1), because part (2) follows directly from part (1), Theorem \ref{thm: relations between different finiteness conditions} and Theorem \ref{thm: compactly presented}.
    
    Let \[\dots\to P_{n+1}\to P_n\to\dots \to P_0\to R\to 0\] be a projective resolution of the trivial $G$-module by proper discrete permutation modules such that $P_0,\dots, P_n$ are finitely generated. 
      By Lemma~\ref{lem: restriction} each $R[\Gamma]$-module $\res^G_\Gamma(P_j)$, $j\le n$, has a projective resolution $Q_\ast^{(j)}$ such that $Q_i^{(j)}$ is finitely generated for $i\in\{0,\dots,n\}$. For $j>n$ let be $Q_\ast^{(j)}$ any projective resolution of $\res^G_\Gamma(P_j)$. By Lemma~\ref{lem: double complex lemma} there is a projective resolution $Q_\ast$ of the trivial $R[\Gamma]$-module $R$ such that 
      \[ Q_k=\bigoplus_{i+j=k}Q_i^{(j)},\]
      which concludes the proof. 
      \end{proof}

The following proposition follows from combining Proposition~3.9 and Theorem~3.10 in~\cite{ilaria+ged}.

\begin{prop}[Castellano-Corob Cook]\label{prop: finiteness of short exact sequences}
Let $G$ be a tdlc group and $R$ be a commutative ring. 
Let $0\to A'\to A\to A''\to 0$ a short exact sequence of discrete $R[G]$-modules. Then the following statements hold true. 
\begin{enumerate}[(a)]
\item If $A'$ has type $\FP^R_{n-1}$ and $A$ has type $\FP^R_n$, then $A''$ has type $\FP^R_n$. 
\item If $A$  has type $\FP^R_{n-1}$ and $A''$ has type $\FP^R_n$, then $A'$ has type $\FP^R_{n-1}$.
\item If $A'$ and $A''$ have type $\FP^R_n$, then so does $A$.  
\end{enumerate}
\end{prop}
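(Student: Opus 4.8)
The plan is to reduce everything to the quasi-abelian category $\topcat{R}$ and then run the classical dimension-shifting arguments. First, by Theorem~\ref{thm: relations between different finiteness conditions}(2), the property $\FP^R_m$ of a discrete $R[G]$-module is equivalent to $\KP^R_m$, i.e.\ to the existence of a projective resolution in $\topcat{R}$ that is compactly generated in degrees $0,\dots,m$. Since a short exact sequence of discrete $R[G]$-modules is in particular a strict short exact sequence in $\topcat{R}$, and since $\topcat{R}$ has enough projectives, the three assertions become purely homological statements about the condition $\KP^R$ inside $\topcat{R}$ — these are precisely the analogs of the standard facts about type $\FP_n$ over a ring (e.g.\ \cite{brown}*{VIII.5} or \cite{bieri}). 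In fact they are exactly \cite{ilaria+ged}*{Proposition~3.9}, so one option is simply to combine that proposition with the equivalence above; the rest of this sketch indicates how the argument runs.

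For part (c) I would use the Horseshoe Lemma: given projective resolutions $P'_\ast\to A'$ and $P''_\ast\to A''$ that are compactly generated in degrees $\le n$, one builds a projective resolution $P_\ast\to A$ with $P_k$ an extension of $P''_k$ by $P'_k$ — in the projective (hence split) case $P_k\cong P'_k\oplus P''_k$ — and a finite direct sum of compactly generated projectives is compactly generated, so $A$ has type $\KP^R_n$. For parts (a) and (b) I would argue by induction on $n$, using the description of $\KP^R_n$ (for $n\ge 1$) in terms of syzygies: $M$ has type $\KP^R_n$ if and only if $M$ is compactly generated and, for one (equivalently any) strict short exact sequence $0\to K\to P\to M\to 0$ with $P$ a compactly generated projective, $K$ has type $\KP^R_{n-1}$. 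Choosing such presentations compatibly for $A'$, $A$, $A''$ produces a strict short exact sequence of kernels $0\to K'\to K\to K''\to 0$, and the induction hypothesis applied to this sequence (together with the base cases $n=0,1$, which reduce to elementary facts about compact generation of submodules and quotients) yields the conclusions; the bookkeeping of which module loses one degree is exactly as in the discrete case.

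The main obstacle is not the homological algebra itself, which is formally identical to the classical arguments, but checking that these arguments are legitimate in the quasi-abelian rather than abelian setting: one must know that kernels of maps between compactly generated projectives in $\topcat{R}$ again have the expected finiteness behaviour, that the Horseshoe construction and the dimension-shifting short exact sequences are strict, and that ``compactly generated'' is preserved under the operations used. All of this is precisely what is set up in \cite{ilaria+ged} (notably their Proposition~3.9), so in the write-up I would lean on that reference rather than redeveloping the quasi-abelian foundations.
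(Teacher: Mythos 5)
Your proposal is correct and follows essentially the same route as the paper, which gives no independent proof but simply observes that the statement follows by combining Proposition~3.9 with Theorem~3.10 of~\cite{ilaria+ged} (the latter being the equivalence $\FP^R_n\Leftrightarrow\KP^R_n$ recorded as Theorem~\ref{thm: relations between different finiteness conditions}(2)). Your additional sketch of the Horseshoe Lemma and dimension-shifting arguments inside $\topcat{R}$ is a reasonable account of what that cited proposition does, but it is not needed beyond the citation.
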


\begin{prop}\label{prop: more general finiteness result for modules}
Let $G=\Gamma\schlichting\Lambda$ be the Schlichting completion of~$\Gamma$ relative to the commensurated subgroup $\Lambda<\Gamma$. Let $R$ be a commutative ring. Let $M$ be a discrete $R[G]$-module. 
If $\Lambda$ has type $\FP^R_m$ and $\res^G_\Gamma(M)$ has type $\FP^R_n$ then $M$ has type $\FP^R_{\min\{m+1,n\}}$. 
\end{prop}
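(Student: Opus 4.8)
The strategy is to compare $M$ to a finitely generated proper discrete permutation $G$-module mapping onto it, and play the finiteness of $\Lambda$ against the finiteness of $\res^G_\Gamma(M)$ using the three-term exact sequence lemma (Proposition~\ref{prop: finiteness of short exact sequences}). Since $\res^G_\Gamma(M)$ has type $\FP^R_n$ (hence in particular is finitely generated as an $R[\Gamma]$-module when $n\ge 1$), and since $G$-finite-generation of $M$ as a discrete permutation module needs to be arranged, I would first reduce to the case where $M$ itself is a proper discrete permutation module: writing the standard bar-type resolution of $M$ by proper discrete permutation modules and then truncating, it suffices by a dimension-shifting argument (and Lemma~\ref{lem: double complex lemma}) to prove the statement for $M$ a finitely generated proper discrete permutation $R[G]$-module. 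Actually, more carefully: I would induct on $\min\{m+1,n\}$, at each stage choosing a surjection $P\twoheadrightarrow M$ from a finitely generated proper discrete permutation $R[G]$-module $P$ (possible as $\res^G_\Gamma M$ is f.g.\ over $R[\Gamma]$, hence $M$ is f.g.\ over $R[G]$, hence a quotient of such a $P$), letting $K=\ker(P\twoheadrightarrow M)$, and feeding $0\to K\to P\to M\to 0$ into the exact sequence proposition.

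The key input is Lemma~\ref{lem: restriction}: since $\Lambda$ has type $\FP^R_m$, the module $\res^G_\Gamma(P)$ has a projective $R[\Gamma]$-resolution that is finitely generated in degrees $0,\dots,m$, i.e.\ $\res^G_\Gamma(P)$ has type $\FP^R_m$ as an $R[\Gamma]$-module. Combined with the hypothesis that $\res^G_\Gamma(M)$ has type $\FP^R_n$, part (b) of Proposition~\ref{prop: finiteness of short exact sequences} applied to $0\to \res^G_\Gamma(K)\to \res^G_\Gamma(P)\to \res^G_\Gamma(M)\to 0$ gives that $\res^G_\Gamma(K)$ has type $\FP^R_{\min\{m,n\}-1}$; unwinding, $\res^G_\Gamma(K)$ has type $\FP^R_{\min\{m, n-1\}}$. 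Now apply the inductive hypothesis to the discrete $R[G]$-module $K$: it gives $K$ has type $\FP^R_{\min\{m+1,\, \min\{m,n-1\}\}}=\FP^R_{\min\{m, n-1\}}$. Then part (a) of Proposition~\ref{prop: finiteness of short exact sequences}, applied to $0\to K\to P\to M\to 0$ over $R[G]$ (with $P$ of type $\FP^R_\infty$ since it is a finitely generated proper discrete permutation module, hence $\FP^R_k$ for all $k$), yields that $M$ has type $\FP^R_{\min\{m, n-1\}+1}=\FP^R_{\min\{m+1, n\}}$, as desired. The base case $\min\{m+1,n\}=0$ just says $M$ is finitely generated over $R[G]$, which holds because $\res^G_\Gamma(M)$ is finitely generated over $R[\Gamma]$.

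One bookkeeping point to get right is why a finitely generated proper discrete permutation $R[G]$-module $P$ has type $\FP^R_\infty$ as a discrete $R[G]$-module: $P$ is a finite direct sum of modules $R[G/V]$ with $V<G$ compact-open, and such a module is its own length-zero resolution by a finitely generated proper discrete permutation module, so $P$ has type $\FP^R_k$ for every $k$, and in particular feeding it into Proposition~\ref{prop: finiteness of short exact sequences}(a) is legitimate for any $n$. Another point is making sure the induction is well-founded — since each application strictly decreases $\min\{m+1,n\}$ (from $\min\{m+1,n\}$ for $M$ to $\min\{m, n-1\}$ for $K$, which is one less), the recursion terminates.

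The main obstacle I expect is the bookkeeping around the two different ambient categories (finiteness of $\res^G_\Gamma(-)$ as $R[\Gamma]$-modules versus finiteness of the $R[G]$-modules themselves) and making sure Proposition~\ref{prop: finiteness of short exact sequences} is being invoked in the correct category each time: parts (a) and (b) are used over $R[G]$ and over $R[\Gamma]$ respectively, and one must check that Lemma~\ref{lem: restriction} genuinely upgrades ``$\Lambda$ has type $\FP^R_m$'' to ``$\res^G_\Gamma(P)$ has type $\FP^R_m$'' for the specific permutation module $P$ appearing (which it does, since $\res^G_\Gamma R[G/V]\cong R[\Gamma/\alpha^{-1}(V)]$ with $\alpha^{-1}(V)$ commensurable with $\Lambda$). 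There is no genuine analytic or homotopical difficulty here; it is a clean dimension-shifting induction once the framework of Proposition~\ref{prop: finiteness of short exact sequences} and Lemma~\ref{lem: restriction} is in place.
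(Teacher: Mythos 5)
Your proposal is correct and follows essentially the same route as the paper: choose a finitely generated proper discrete permutation module $P\twoheadrightarrow M$ with kernel $K$, use Lemma~\ref{lem: restriction} to see that $\res^G_\Gamma(P)$ has type $\FP^R_m$, apply Proposition~\ref{prop: finiteness of short exact sequences}(b) to the restricted sequence over $\Gamma$ and part (a) to the sequence over $G$, and close the induction (the paper inducts on $n$ rather than on $\min\{m+1,n\}$, but this is immaterial). The bookkeeping points you flag --- $P$ being of type $\FP^R_\infty$ and the two ambient categories --- are handled the same way in the paper's argument.
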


\begin{proof}
If $\res_\Gamma^G(M)$ is not finitely generated, we are done. If $\res_\Gamma^G(M)$ is finitely generated, then $M$ is clearly finitely generated. In particular, there is a short exact sequence
\begin{equation}\label{eq: short exact presentation}
0\to K\to P\to M\to 0,
\end{equation}
where $P$ is a finitely generated proper discrete permutation module. 

We show the statement by induction over~$n$. 
The case $n=0$ just means finite generation, and there is nothing more to do. 
Suppose the statement holds true for every restriction of an $R[G]$-module of type $\FP^R_{n-1}$. Let $\res_\Gamma^G(M)$ be of type $\FP^R_n$ and choose a sequence as in~\eqref{eq: short exact presentation}. 
We apply Proposition~\ref{prop: finiteness of short exact sequences} to the short exact sequence~\eqref{eq: short exact presentation} for the tdlc group~$G$ and to the short exact sequence 
\begin{equation}\label{eq: second exact sequence} 0\to \res^G_\Gamma(K)\to \res^G_\Gamma(P)\to \res^G_\Gamma(M)\to 0
\end{equation}	
for the discrete group $\Gamma$. 
By Lemma~\ref{lem: restriction} the module $\res^G_\Gamma(P)$ has type $\FP^R_m$.  
By part~(b) of the above proposition the kernel $\res_\Gamma^G(K)$ has type $\FP^R_{\min\{m,n-1\}}$. By induction hypothesis, $K$ has type $\FP^R_{\min\{m,n-1\}}$. By part~(a) of the above proposition, applied to~\eqref{eq: short exact presentation}, we obtain that $M$ has type $\FP^R_{\min\{m,n-1\}+1}=\FP^R_{\min\{m+1,n\}}$. This concludes the proof. 
\end{proof}

\begin{proof}[Proof of Theorem~\ref{thm: finiteness for tdlc group}]
The first part of the theorem follows by applying Proposition~\ref{prop: more general finiteness result for modules} to the trivial $G$-module~$R$. If $\Lambda$ is compactly generated and $\Gamma$ is compactly presented, then $G$ is compactly presented by Theorem~\ref{thm: compactly presented}. By~\cite{ilaria+ged}*{Proposition 3.13} being compactly presented and having type $\FP^\bbZ_n$ is equivalent to having type $\Finite_n$. Therefore the second part of the theorem follows from the first one. 
\end{proof}

\begin{example}[The Abels-Brown group]
Let $R$ be a commutative ring. Let $\Gamma_n(R)$ denote the subgroup of $\GL_{n+1}(R)$ that consists of upper triangular matrices $(g_{i,j})$ such that $g_{1,1}=g_{n+1,n+1}=1$. For example, $\Gamma_2(R)$ consists of matrices of the form 
\[
\begin{pmatrix}
    1 & \ast & \ast\\
    0 & \ast & \ast\\
    0 & 0    & 1
\end{pmatrix}.
\]
This group was studied by Abels and Brown~\cite{abels+brown}. They showed that $\Gamma_n(\bbZ[1/p])$ is of type $\FP^\bbZ_{n-1}$ but not of type $\FP^\bbZ_n$. Moreover, for $n\ge 3$ it is finitely presented. 
The subgroup $\Lambda_n=\Gamma_n(\bbZ)$ has entries $\pm 1$ on the diagonal. Therefore, $\Lambda_n$ is finitely generated nilpotent, hence of type $\FP^\bbZ_\infty$. Let $G_n$ be the Schlichting completion of $\Gamma_n(\bbZ[1/p])$ relative to $\Lambda_n$. By Theorem~\ref{thm: finiteness for tdlc group}, $G_n$ is of type $\FP^\bbZ_{n-1}$. By Theorem~\ref{thm: finiteness for discrete group}, $G_n$ is not of type $\FP^\bbZ_n$. 
By Theorem~\ref{thm: compactly presented}, $G_n$ is compactly presented for $n\ge 3$. By~\cite{shalom+willis}*{Lemma~3.5 and~3.6} $G_n$ is isomorphic to $\Gamma_n(\bbQ_p)/K$ where $K$ is the largest normal subgroup of $\Gamma_n(\bbQ_p)$ that is contained in  $\Gamma_n(\bbZ_p)$. Since $K$ is compact we can conclude that $\Gamma_n(\bbQ_p)$ has the same finiteness properties as $G_n$ by~\cite{ilaria+ged}*{Lemma~3.15}. 
\end{example}

\section{Continuous cohomology vs. cohomology of the dense subgroup}\label{sec: continuous cohom}

The \emph{continuous cohomology} of a locally compact group is defined by the complex of continuous cochains in the standard resolution. In Proposition~\ref{prop: continuous vs discrete cohomology} below, we compare the continuous cohomology to the \emph{discrete cohomology} both with real coefficients. The discrete cohomology can be computed from a resolution by proper discrete permutation modules and was introduced as a derived functor in~\cite{castellano+weigel}*{Section~2.5}. 

We do not claim originality for Proposition~\ref{prop: continuous vs discrete cohomology}. It can be deduced from the results in Guichardet's book~\cite{guichardet} but we give a proof because we need the specific chain map $\phi$ used in the proof later.  

\begin{prop}\label{prop: continuous vs discrete cohomology}
The continuous cohomology $H_c^\ast(G,\bbR)$ of a tdlc group~$G$ is 
isomorphic to the real discrete cohomology $\dH^\ast(G,\bbR)$. 
\end{prop}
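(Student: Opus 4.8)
The plan is to compare two resolutions of the trivial module $\bbR$ that compute the two cohomology theories, and to exhibit an explicit quasi-isomorphism between the relevant cochain complexes. On the continuous side, $H^\ast_c(G,\bbR)$ is by definition the cohomology of the complex of continuous functions $G^{\ast+1}\to\bbR$ with the usual bar differential, i.e.\ of $\homo_G(C_\ast,\bbR)$ where $C_\ast$ is the (topological) bar resolution $C_n = \bbR[\map(G^{n+1},-)]$ built from continuous cochains. On the discrete side, $\dH^\ast(G,\bbR)$ is computed from $\homo_G(P_\ast,\bbR)$ for any resolution $P_\ast\to\bbR$ by proper discrete permutation $\bbR[G]$-modules; since $\bbQ\subset\bbR$, such a resolution exists and proper discrete permutation modules are projective in $\discat{\bbR}$, so the discrete cohomology is well defined and resolution-independent.

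First I would construct the specific chain map $\phi$. The natural candidate is the ``locally constant bar resolution'' $B_\ast$ with $B_n$ the free $\bbR$-module on the set of locally constant (equivalently, continuous, since $G$ is tdlc and the target is discrete) functions $G^{n+1}\to$ points, i.e.\ $B_n = \bbR[\text{locally constant maps } G^{n+1}\to G]$ — more precisely the submodule of the algebraic bar resolution spanned by tuples $(g_0,\dots,g_n)$ considered up to the equivalence generated by local constancy, which is exactly a proper discrete permutation module: each $B_n$ is a direct sum of modules $\bbR[G/U]$ for compact-open $U$. One checks $B_\ast\to\bbR$ is a resolution (contracting homotopy: the usual one $(g_0,\dots,g_n)\mapsto(e,g_0,\dots,g_n)$, which preserves local constancy). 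Then $\dH^\ast(G,\bbR)=H^\ast(\homo_G(B_\ast,\bbR))$, and $\homo_G(B_\ast,\bbR)$ is precisely the complex of $G$-invariant locally constant functions $G^{n+1}\to\bbR$ with the bar differential. The inclusion of locally constant cochains into continuous cochains is then the desired chain map $\phi$ at the level of cochain complexes; dually it is induced by a map of resolutions $C_\ast\to B_\ast$.

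Next I would prove $\phi$ is a quasi-isomorphism. Here is where I expect the main work. Since both $C_\ast$ and $B_\ast$ are resolutions of $\bbR$, it suffices to show the comparison map is a chain homotopy equivalence after applying $\homo_G(-,\bbR)$, or — more robustly — to invoke the fact that continuous cochains on a tdlc group are computed by locally constant cochains: every continuous map from a profinite (hence from a tdlc, by a compact-exhaustion and colimit argument) space to a discrete space is locally constant, so on $G^{n+1}$ a continuous $\bbR$-valued cochain need \emph{not} be locally constant (the target $\bbR$ is not discrete), and this is the genuine subtlety. The resolution of this point is to filter $G$ by compact-open subgroups $U$ and write $H^\ast_c(G,\bbR)=\colim_U H^\ast_c(G/\!\!/\text{stuff})$; more directly, one uses that $\bbR$ as a topological $G$-module is the colimit of its finite-dimensional (discrete) submodules only trivially, so instead one should argue that the continuous bar complex is the colimit over compact-open $U<G$ of the complexes of $U$-bi-invariant (hence locally constant) cochains, and that this colimit agrees degreewise with the locally constant bar complex. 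Concretely: a continuous function $f\colon G^{n+1}\to\bbR$ is uniformly continuous on compact sets, but more to the point, continuity plus the tdlc structure gives that $f$ is locally constant — this \emph{is} true when $\bbR$ carries its usual topology, because the fibers of a continuous map to a Hausdorff space from a space with a basis of clopen sets need not be clopen; the correct statement is that continuous cochains and locally constant cochains have the same cohomology, which follows from Guichardet, and I would cite that (the paper explicitly permits this). So the key step is: $\phi$ is a quasi-isomorphism because on a tdlc group the continuous and locally constant (= smooth) cochain complexes are quasi-isomorphic, by a standard argument running the colimit over compact-open subgroups and using that $\bbR$ is a ``trivial'' smooth module in the relevant sense; this gives $H^\ast_c(G,\bbR)\cong H^\ast(\homo_G(B_\ast,\bbR))=\dH^\ast(G,\bbR)$, with the isomorphism induced by the explicit $\phi$, which is what later sections need.

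The main obstacle, then, is not the formal comparison-of-resolutions machinery but the honest analytic input that continuous real cochains on $G^{n+1}$ compute the same cohomology as locally constant ones; I would handle it by the compact-open subgroup colimit argument (for each compact-open $U$, averaging over $U$ is unavailable with $\bbR$-coefficients in a naive way, so instead one restricts to the cofinal system and uses that any continuous cochain is a uniform limit of locally constant ones on compacta, combined with a standard contracting-homotopy estimate) or, if one prefers brevity, by directly quoting the corresponding statement in Guichardet's book and merely recording the explicit chain map $\phi$ for later use. I would write the proof in the latter style, emphasizing the construction of $\phi$ and verifying it is a map of resolutions, and deferring the quasi-isomorphism claim to the cited literature.
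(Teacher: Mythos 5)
There is a genuine gap: the entire content of the proposition is that the comparison map is a quasi-isomorphism, and you never prove this --- you defer it to Guichardet after a sketch that is internally inconsistent. You first assert that a continuous map $G^{n+1}\to\bbR$ is locally constant and in the same sentence give a reason why it need not be; it is not locally constant in general (a continuous surjection from a Cantor set onto an interval already shows this), so your inclusion of locally constant cochains into continuous cochains is a proper inclusion and the real work remains undone. Worse, you explicitly discard the one tool that closes the gap: you write that ``averaging over $U$ is unavailable with $\bbR$-coefficients in a naive way,'' but averaging is exactly what works. The paper fixes a single compact-open subgroup $U<G$, normalizes the left Haar measure so that $\mu(U)=1$, takes $P_\ast=\bbR[(G/U)^{\ast+1}]$ as the resolution by proper discrete permutation modules, lets $\phi$ be induced by the projection $G\to G/U$, and defines an explicit homotopy inverse by
\[ \rho(f)(g_0U,\dots,g_nU)=\int_{U^{n+1}} f\bigl(g_0u_0,\dots,g_nu_n\bigr)\,d\mu(u_0)\cdots d\mu(u_n), \]
which exists because $f$ is continuous and $U^{n+1}$ is compact, and is well defined by left invariance of $\mu$. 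Then $\rho\circ\phi=\id$ on the nose, and the partial averages $S^n_i$ (integrating only over the first $i$ coordinates) assemble into an explicit chain homotopy $\phi\circ\rho\simeq\id$. No colimit over compact-open subgroups and no locally constant intermediate complex is needed.

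A secondary problem is that your intermediate object $B_\ast$ is not well defined as written (a ``submodule spanned by tuples considered up to an equivalence'' is a quotient, not a submodule), and even a corrected version would still have to be exhibited as a resolution by \emph{proper discrete permutation} modules before it computes $\dH^\ast(G,\bbR)$; the paper avoids this entirely by working with the single module $\bbR[(G/U)^{n+1}]$ in each degree. Finally, the paper needs the specific map $\phi$ and its homotopy inverse again in the proof of Theorem~\ref{thm: cohomology isomorphism}, so a pure citation of Guichardet would not serve the later arguments even if it settled the isomorphism itself.
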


\begin{proof}
Let $U<G$ be a compact-open subgroup. 
Let $\mu$ be the left-invariant Haar measure on $G$ with $\mu(U)=1$. 
Then $\bbR[(G/U)^{\ast+1}]$ with the usual differentials of the bar resolution is a resolution of the trivial $G$-module $\bbR$ by proper discrete permutation modules. It suffices to show that the projection $G\to G/U$ induces a homotopy equivalence
\begin{equation}\label{eq: chain map discrete vs continuous} \phi\colon \hom_{\bbR[G]}\bigl(\bbR[(G/U)^{\ast+1}],\bbR\bigr)^G\to C\bigl(G^{\ast+1},\bbR\bigr)^G.
\end{equation}
A homotopy inverse $\rho$ is defined as follows. For a cochain $f\colon G^{n+1}\to \bbR$ let $\rho(f)\colon (G/U)^{n+1}\to\bbR$ be the map 
\[ \rho(f)(g_0U,\dots,g_nU)=\int_{U^{n+1}} f\bigl( g_0u_0,\dots, g_nu_n \bigr)d\mu(u_0)\dots d\mu(u_n). \]
Since $U^{n+1}$ is compact and $f$ is continuous the integral exists. The definition is independent of the choice of representatives $g_0,\dots, g_n$ of the $U$-coset classes by the left-invariance of~$\mu$. Clearly, $\rho$ is a cochain map and $\rho\circ\phi=\id$. 

The chain homotopy $\phi\circ\rho\simeq \id$ is defined as follows. Let  
\[ S^n_i(f)(g_0,\dots, g_{n-1})\defq\int_{U^i} f\bigl(g_0u_0,\dots,g_{i-1}u_{i-1},g_i,\dots,g_{n-1}\bigr)d\mu(u_0)\dots d\mu(u_i)\]
for every $n\ge 1$ and every $0\le i\le n-1$.  
Similarly as above, this formula defines a homomorphism $S^n_i\colon C(G^{n+1}, \bbR)^G\to C(G^n, \bbR)^G$. Then $H^n=\sum_{i=0}^{n-1} (-1)^iS^n_i$ is the chain homotopy $\phi\circ\rho\simeq \id$. 
\end{proof}

\begin{comment}
It is straightforward to verify the following simplicial relations: 
\begin{align*}
S^{n+1}_i\partial_j^n&=\partial_{j-1}^{n-1}S^n_i~\text{for $2\le i+2\le j\le n+1$}\\
S^{n+1}_i\partial_j^n&=\partial^{n-1}_jS^n_{i-1}~\text{for $0\le j\le i-1\le n-1$}\\
S^{n+1}_i\partial_{i+1}^n&=S^{n+1}_{i+1}\partial^n_{i+1}~\text{ for $0\le i\le n-1$}\\
S^{n+1}_0\partial^n_0&=\id\\
S^{n+1}_n\partial^n_{n+1}&=\rho^n\circ\phi^n
\end{align*}
\comrs{The formula needs to be check and might need a slight adjustment.}
Then $H^n=\sum_{i=0}^{n-1} (-1)^iS^n_i$ is the chain homotopy $\phi\circ\rho\simeq \id$ as a straightforward calculation yields 
\[ 
\partial^{n-1} H^n+H^{n+1}\partial^n=\id-\phi^n\circ \rho^n. \qedhere\]
\end{comment}

Now we are able to quickly conclude the proof of Theorem~\ref{thm: cohomology isomorphism}. 

\begin{proof}[Proof of Theorem~\ref{thm: cohomology isomorphism}]
Let $G=\Gamma\schlichting\Lambda$ and $U$ be the closure of $\Lambda$ in~$G$. 
Let $P_\ast=\bbR[(G/U)^{\ast+1}]$ be the resolution of the trivial $G$-module $\bbR$ appearing in the proof of Proposition~\ref{prop: continuous vs discrete cohomology}. Each $P_n$ is a proper discrete permutation module. By Lemma~\ref{lem: restriction}, the restricted resolution  
$\res^G_\Gamma P_\ast=\bbR[(\Gamma/\Lambda)^{\ast+1}]$ is a flat $\bbR[\Gamma]$-resolution of~$\bbR$. 

 The map~$\psi$ in the following commutative square is induced by the projection $\Gamma\to \Gamma/\Lambda$. The map $\phi$ is the one in~\eqref{eq: chain map discrete vs continuous}. 

\[
\begin{tikzcd}
C\bigl(G^{\ast+1}, \bbR\bigr)^G\ar[r, "\res"] & \hom\bigl(\bbR[\Gamma^{\ast+1}],\bbR)^\Gamma  \\
\hom\bigl(P_\ast, \bbR\bigr)^G\ar[r,"\cong"]\ar[u, "\phi"] &  \hom\bigl(\res^G_\Gamma P_\ast, \bbR\bigr)^\Gamma \ar[u,"\psi"]
\end{tikzcd}
\]
The statement of Theorem~\ref{thm: cohomology isomorphism} is that the upper horizontal restriction is a weak isomorphism. The map $\phi$  is a weak isomorphism by the proof of Proposition~\ref{prop: compactly gen via CW}. The forgetful lower horizontal map is obviously an isomorphism. 
So it suffices to show that $\psi$ is a weak isomorphism. 

By~\cite{weibel}*{Lemma~3.2.8 on p.~71} the projection from the projective resolution $\bbR[\Gamma^{\ast+1}]$
to the flat resolution $\res^G_\Gamma P_\ast$ induces a weak isomorphism \[ \bbR[\Gamma^{\ast+1}]\otimes_{\bbR[\Gamma]}\bbR\xrightarrow{\sim} \res^G_\Gamma P_\ast\otimes_{\bbR[\Gamma]}\bbR.\] Its dual map 
\[ 
\hom_\bbR\bigl(\res^G_\Gamma P_\ast\otimes_{\bbR[\Gamma]}\bbR,\bbR\bigr)\xrightarrow{\sim} \hom_\bbR\bigl(\bbR[\Gamma^{\ast+1}]\otimes_{\bbR[\Gamma]}\bbR,\bbR   \bigr)
\]
is isomorphic to~$\psi$. 
The dual map is a weak isomorphism by the universal coefficient theorem over $\bbR$~\cite{weibel}*{Theorem~3.6.5 on p.~89}. 
\end{proof}

\section{The Euler characteristic of the Schlichting completion}\label{sec: Euler}

The Euler characteristic $\chi^{(2)}(G,\mu)\in \bbR$ of a unimodular tdlc group~$G$ with Haar measure~$\mu$ that admits a contractible smooth proper $G$-CW-complex or a finite resolution by proper discrete permutation modules was introduced in~\cite{petersen+sauer+thom}. A more general  approach can be found in~\cite{euler}.

If 
\begin{equation}\label{eq: resolution permutation} 0\to \bigoplus_{i\in I_n} \bbC[G/U_i^{(n)}]\to \dots\to \bigoplus_{i\in I_0}\bbC[G/U_i^{(0)}]\to \bbC\to 0\end{equation}
is a resolution by proper discrete permutation modules, then 
\begin{equation}\label{eq: def of Euler} \chi^{(2)}(G,\mu)=\sum_{p=0}^n (-1)^p \sum_{i\in I_p} \mu\bigl(U_i^{(p)}\bigr)^{-1}.
\end{equation}
If $G$ is discrete, then one usually takes the counting measure as Haar measure and omits the Haar measure in the notation. In this case, $\chi^{(2)}(G,\mu)$ coincides with the $\ell^2$-Euler characteristic of the group~\cite{lueck}*{Section~7.2}. Moreover, if $G$ is discrete and of type $F$, then $\chi^{(2)}(G,\mu)$ coincides with the classical Euler characteristic of the group. 

By~\cite{petersen+sauer+thom}*{Theorem~4.9} the Euler characteristic of a unimodular tdlc group is the alternating sum of its $\ell^2$-Betti numbers, which were introduced by Petersen~\cite{petersen}.  
\begin{equation}\label{eq: alternating l2 betti} \chi^{(2)}(G,\mu)=\sum_{p\ge 0}(-1)^p \beta^{(2)}_p(G,\mu)
\end{equation}
The terms in~\eqref{eq: def of Euler} can be interpreted in terms of the von Neumann dimensions $\dim_G$ of modules of the type $L(G,\mu)\otimes_{\calH(G)}\bbC[G/U]=L(G,\mu)p_U$, that is, 
\[ \dim_G L(G,\mu)\otimes_{\calH(G)}\bbC[G/U]=\mu(U)^{-1},\]
where $L(G,\mu)$ is the von Neumann algebra of $G$ relative to $\mu$, and $\calH(G)$ is the Hecke algebra of complex-valued locally constant functions, and $L(G,\mu)p_U$ is the projection onto the $U$-invariant vectors in $L^2(G,\mu)$. The proof of the second formula of $\chi^{(2)}(G, \mu)$ is then just a matter of additivity of the von Neumann dimension. We refer to~\cite{petersen+sauer+thom} for more details. 

An important consequence of~\eqref{eq: alternating l2 betti} and the corresponding property for $\ell^2$-Betti numbers~\cite{kyed+petersen+vaes} is the equality 
\begin{equation}\label{eq: multiplicativity for lattices}
\chi^{(2)}(\Gamma)=\covol(\Gamma,\mu)\cdot \chi^{(2)}(G,\mu)
\end{equation}
for every lattice $\Gamma$ in a locally compact group $G$ with Haar measure~$\mu$. In particular, if $\Lambda<\Gamma$ is a subgroup of finite index in a discrete group $\Gamma$ of type $\FP^\bbQ$, then 
\begin{equation}\label{eq: multiplicativity for finite index}\chi^{(2)}(\Lambda)=[\Gamma:\Lambda]\cdot \chi^{(2)}(\Gamma).
\end{equation}

\begin{proof}[Proof of Theorem~\ref{thm: euler characteristic}]
    As before, we denote the canonical map of $\Gamma$ into the Schlichting completion $G=\Gamma\schlichting\Lambda$ by $\alpha$. Further, $U$ is the closure of $\alpha(\Lambda)$. 
    Consider a projective resolution of the trivial $G$-module by proper discrete permutation modules as in~\eqref{eq: resolution permutation}. 
For every $j\in\{0,\dots,n\}$ and every $i\in I_j$ we choose a finite projective $\bbC[\alpha^{-1}(U_i^{(j)})]$-resolution $\tilde Q(i,j)_\ast$ of the trivial $\bbC[\alpha^{-1}(U_i^{(j)})]$-module~$\bbC$. Since $\alpha^{-1}(U_i^{(j)})$ and $\Lambda=\alpha^{-1}(U)$ are commensurable, the group $\alpha^{-1}(U_i^{(j)})$ is of type $\FP^\bbQ$ (thus, $\FP^\bbC$). This follows from the combination of~\cite{bieri}*{Theorem~5.11 on p.~78} and~\cite{brown}*{Proposition 5.1 on p.~197 and Proposition 6.1 on p.~199}. 
Tensoring this resolution with $\bbC[\Gamma]$ we obtain a finite projective $\bbC[\Gamma]$-resolution of $\bbC[\Gamma/\alpha^{-1}(U_i^{(j)})]$ which we denote by $Q(i,j)_\ast$. For every $j\in\{0,\dots,n\}$, the sum $\bigoplus_{i\in I_j} Q(i,j)_\ast$ is a finite projective $\bbC[\Gamma]$-resolution of 
\[\res_\Gamma^G\bigl(\bigoplus_{i\in I_j} \bbC\bigl[G/U_i^{(j)}\bigr]\bigr)\cong \bigoplus_{i\in I_j} \bbC\bigl[\Gamma/\alpha^{-1}(U_i^{(j)})\bigr].\]
Similarly as in the proof of Theorem~\ref{thm: finiteness for discrete group}, we find a projective resolution $Q_\ast$ of the trivial $\bbC[\Gamma]$-module~$\bbC$ such that 
\[Q_n\cong\bigoplus_{k+j=n} \bigoplus_{i\in I_j}Q(i, j)_k.\]  

Using the compatibility of the von Neumann dimension under induction, we conclude that 
\begin{align*}
    \chi(\Gamma)&=\sum_{n\ge 0}(-1)^n\dim_{L(\Gamma)}\bigl(L(\Gamma)\otimes_{\bbC[\Gamma]}Q_n\bigr)\\
        &=\sum_{j\ge 0}(-1)^j \sum_{k\ge 0}(-1)^k \sum_{i\in I_j} \dim_{L(\Gamma)}\bigl(L(\Gamma)\otimes_{\bbC[\Gamma]}Q(i,j)_k\bigr)\\
        &=\sum_{j\ge 0}(-1)^j \sum_{i\in I_j}\sum_{k\ge 0}(-1)^k  \dim_{L(\alpha^{-1}(U_i^{(j)}))}\bigl(L(\alpha^{-1}(U_i^{(j)}))\otimes_{\bbC[\alpha^{-1}(U_i^{(j)})]}\tilde Q(i,j)_k\bigr)\\
        &=\sum_{j\ge 0}(-1)^j\sum_{i\in I_j}\chi^{(2)}\bigl(\alpha^{-1}(U_i^{(j)})\bigr)\\
        &=\sum_{j\ge 0}(-1)^j\sum_{i\in I_j}\mu\bigl(U_i^{(j)}\bigr)^{-1}\chi^{(2)}(\Lambda)\qquad\text{(use~\eqref{eq: multiplicativity for finite index} and $\mu(U)=1$ and $\Lambda=\alpha^{-1}(U)$)}\\
        &=\chi^{(2)}(G,\mu)\cdot\chi^{(2)}(\Lambda).\qedhere
\end{align*}

\end{proof}

\begin{example}
The group $\Gamma=\SL_n(\bbZ[1/p])$ is a lattice in $G=\SL_n(\bbR)\times \SL_n(\bbQ_p)$. 
Let $\Lambda=\SL_n(\bbZ)<\Gamma$. 
We compare Theorem~\ref{thm: euler characteristic} for $\Gamma\schlichting\Lambda$ to computations we obtain from the theory of $\ell^2$-Betti numbers of locally compact groups via~\eqref{eq: alternating l2 betti}.  
Let $\mu$ and $\nu$ be Haar measures of the left and right factor of $G$, respectively. 
Then 
\[
\chi^{(2)}\bigl( \Gamma\bigr) = \operatorname{covol}(\Gamma, \mu\times\nu)\cdot \chi^{(2)}\bigl( \SL_n(\bbR),\mu)\cdot\chi^{(2)}\bigl(\SL_n(\bbQ_p),\nu\bigr).
\]
Similarly, since $\SL_n(\bbZ)$ is a lattice of $\SL_n(\bbR)$ we obtain that 
\[ \chi^{(2)}\bigl( \SL_n(\bbZ)\bigr) = \operatorname{covol}\bigl(\SL_n(\bbZ), \mu\bigr)\cdot \chi^{(2)}\bigl( \SL_n(\bbR),\mu\bigr). 
\]
We normalize $\nu$ so that $\nu(\SL_n(\bbZ_p))=1$. The push-forward measure $\xi$ on $\PSL_n(\bbQ_p)$ under the projection $\SL_n(\bbQ_p)\to \PSL_n(\bbQ_p)$ satisfies $\xi(\PSL_n(\bbZ_p))=1$. By~\cite{petersen} and~\eqref{eq: alternating l2 betti} we have 
\[ \chi^{(2)}\bigl( \SL_n(\bbQ_p),\nu\bigr)=\chi^{(2)}\bigl( \PSL_n(\bbQ_p),\xi\bigr).\]
Therefore, 
\begin{equation}\label{eq: ratio of covolumes} \chi^{(2)}(\Gamma)=\frac{\covol(\Gamma,\mu\times\nu)}{\covol(\SL_n(\bbZ),\mu)}\cdot \chi^{(2)}\bigl(\SL_n(\bbZ)\bigr)\cdot \chi^{(2)}\bigl(\PSL_n(\bbQ_p),\xi\bigr).
\end{equation}

There is an isomorphism $\SL_n(\bbZ[1/p])\schlichting \SL_n(\bbZ)\cong \PSL_n(\bbQ_p)$ under which the closure of $\SL_n(\bbZ)$ is mapped onto $\PSL_n(\bbZ_p)$. See~\cite{shalom+willis}*{Example~3.10}. By Theorem~\ref{thm: euler characteristic}, 
\[
\chi^{(2)}(\Gamma)= \chi^{(2)}\bigl(\SL_n(\bbZ)\bigr)\cdot \chi^{(2)}\bigl(\PSL_n(\bbQ_p),\xi\bigr).
\]
As a consequence, the ratio of covolumes in~\eqref{eq: ratio of covolumes} is~$1$. 
\end{example}

\begin{bibdiv}
    \begin{biblist}
\bib{abels+brown}{article}{
   author={Abels, Herbert},
   author={Brown, Kenneth S.},
   title={Finiteness properties of solvable $S$-arithmetic groups: an
   example},
   booktitle={Proceedings of the Northwestern conference on cohomology of
   groups (Evanston, Ill., 1985)},
   journal={J. Pure Appl. Algebra},
   volume={44},
   date={1987},
   number={1-3},
   pages={77--83},
   %issn={0022-4049},
   %review={\MR{0885096}},
   %doi={10.1016/0022-4049(87)90016-8},
}
 
\bib{bieri}{book}{
   author={Bieri, Robert},
   title={Homological dimension of discrete groups},
   series={Queen Mary College Mathematics Notes},
   edition={2},
   publisher={Queen Mary College, Department of Pure Mathematics, London},
   date={1981},
   pages={iv+198},
   %review={\MR{0715779}},
}
\bib{borel+yang}{article}{
   author={Borel, Armand},
   author={Yang, Jun},
   title={The rank conjecture for number fields},
   journal={Math. Res. Lett.},
   volume={1},
   date={1994},
   number={6},
   pages={689--699},
   %issn={1073-2780},
   %review={\MR{1306014}},
   %doi={10.4310/MRL.1994.v1.n6.a6},
}
\bib{brown-finiteness}{article}{
   author={Brown, Kenneth S.},
   title={Finiteness properties of groups},
   booktitle={Proceedings of the Northwestern conference on cohomology of
   groups (Evanston, Ill., 1985)},
   journal={J. Pure Appl. Algebra},
   volume={44},
   date={1987},
   number={1-3},
   pages={45--75},
   %issn={0022-4049},
   %review={\MR{0885095}},
   %doi={10.1016/0022-4049(87)90015-6},
}
\bib{brown}{book}{
    author={Brown, Kenneth S.},
    title={Cohomology of groups},
    series={Graduate Texts in Mathematics},
    volume={87},
    note={Corrected reprint of the 1982 original},
    publisher={Springer-Verlag, New York},
    date={1994},
    pages={x+306},
    %isbn={0-387-90688-6},
    %review={\MR{1324339}},
}
\bib{brown2}{article}{
   author={Brown, Kenneth S.},
   title={Complete Euler characteristics and fixed-point theory},
   journal={J. Pure Appl. Algebra},
   volume={24},
   date={1982},
   number={2},
   pages={103--121},
   %issn={0022-4049},
   %review={\MR{0651839}},
   %doi={10.1016/0022-4049(82)90008-1},
}
\bib{brown-acyclic}{article}{
   author={Brown, Kenneth S.},
   title={The geometry of finitely presented infinite simple groups},
   conference={
      title={Algorithms and classification in combinatorial group theory},
      address={Berkeley, CA},
      date={1989},
   },
   book={
      series={Math. Sci. Res. Inst. Publ.},
      volume={23},
      publisher={Springer, New York},
   },
   %isbn={0-387-97685-X},
   date={1992},
   pages={121--136},
   %review={\MR{1230631}},
   %doi={10.1007/978-1-4613-9730-4\_5},
}
\bib{brown+geoghegan}{article}{
   author={Brown, Kenneth S.},
   author={Geoghegan, Ross},
   title={An infinite-dimensional torsion-free ${\rm FP}\sb{\infty }$ group},
   journal={Invent. Math.},
   volume={77},
   date={1984},
   number={2},
   pages={367--381},
   %issn={0020-9910},
   %review={\MR{0752825}},
   %doi={10.1007/BF01388451},
}

\bib{boudec}{article}{
   author={Le Boudec, Adrien},
   title={Compact presentability of tree almost automorphism groups},
   language={English, with English and French summaries},
   journal={Ann. Inst. Fourier (Grenoble)},
   volume={67},
   date={2017},
   number={1},
   pages={329--365},
   %issn={0373-0956},
   %review={\MR{3612334}},
}
\bib{ilaria+ged}{article}{
    author={Castellano, I.},
    author={Corob Cook, G.},
    title={Finiteness properties of totally disconnected locally compact groups},
    journal={J. Algebra},
    volume={543},
    date={2020},
    pages={54--97},
    %issn={0021-8693},
    %review={\MR{4019790}},
    %doi={10.1016/j.jalgebra.2019.09.017},
}
\bib{euler}{article}{
  author = {Castellano, Ilaria},
  author = {Chinello, Gianmarco},
  author = {Weigel, Thomas},
  title = {The Hattori-Stallings rank, the Euler-Poincaré characteristic and zeta functions of totally disconnected locally compact groups},
  eprint = {https://arxiv.org/pdf/2405.08105},
  year = {2024},
}

\bib{castellano+weigel}{article}{
    author={Castellano, I.},
    author={Weigel, Th.},
    title={Rational discrete cohomology for totally disconnected locally compact groups},
    journal={J. Algebra},
    volume={453},
    date={2016},
    pages={101--159},
    %issn={0021-8693},
    %review={\MR{3465350}},
    %doi={10.1016/j.jalgebra.2016.01.008},
}

\bib{geoghegan}{book}{
    author={Geoghegan, Ross},
    title={Topological Methods in Group Theory},
    series={Graduate Texts in Mathematics},
    publisher={Springer New York, NY},
    date={2008},
    pages={XVI+473},
    %isbn={978-0-387-74611-1},
    %doi={https://doi.org/10.1007/978-0-387-74614-2},
}
\bib{guichardet}{book}{
   author={Guichardet, A.},
   title={Cohomologie des groupes topologiques et des alg\`ebres de Lie},
   language={French},
   series={Textes Math\'{e}matiques [Mathematical Texts]},
   volume={2},
   publisher={CEDIC, Paris},
   date={1980},
   pages={xvi+394},
   %isbn={2-7124-0715-6},
   %review={\MR{0644979}},
}
\bib{tomdieck}{book}{
    author={tom Dieck, Tammo},
    title={Transformation groups},
    series={De Gruyter Studies in Mathematics},
    volume={8},
    publisher={Walter de Gruyter \& Co., Berlin},
    date={1987},
    pages={x+312},
    %isbn={3-11-009745-1},
    %review={\MR{0889050}},
    %doi={10.1515/9783110858372.312},
}
\bib{kyed+petersen+vaes}{article}{
   author={Kyed, David},
   author={Petersen, Henrik Densing},
   author={Vaes, Stefaan},
   title={$L^2$-Betti numbers of locally compact groups and their cross
   section equivalence relations},
   journal={Trans. Amer. Math. Soc.},
   volume={367},
   date={2015},
   number={7},
   pages={4917--4956},
   %issn={0002-9947},
   %review={\MR{3335405}},
   %doi={10.1090/S0002-9947-2015-06449-6},
}
\bib{lueck}{book}{
   author={L\"{u}ck, Wolfgang},
   title={$L^2$-invariants: theory and applications to geometry and
   $K$-theory},
   series={Ergebnisse der Mathematik und ihrer Grenzgebiete. 3. Folge. A
   Series of Modern Surveys in Mathematics [Results in Mathematics and
   Related Areas. 3rd Series. A Series of Modern Surveys in Mathematics]},
   volume={44},
   publisher={Springer-Verlag, Berlin},
   date={2002},
   pages={xvi+595},
   %isbn={3-540-43566-2},
   %review={\MR{1926649}},
   %doi={10.1007/978-3-662-04687-6},
}
\bib{petersen}{article}{
   author={Petersen, Henrik Densing},
   title={$L^2$-Betti numbers of locally compact groups},
   language={English, with English and French summaries},
   journal={C. R. Math. Acad. Sci. Paris},
   volume={351},
   date={2013},
   number={9-10},
   pages={339--342},
   %issn={1631-073X},
   %review={\MR{3072156}},
   %doi={10.1016/j.crma.2013.05.013},
}
\bib{petersen+sauer+thom}{article}{
   author={Petersen, Henrik Densing},
   author={Sauer, Roman},
   author={Thom, Andreas},
   title={$L^2$-Betti numbers of totally disconnected groups and their
   approximation by Betti numbers of lattices},
   journal={J. Topol.},
   volume={11},
   date={2018},
   number={1},
   pages={257--282},
   %issn={1753-8416},
   %review={\MR{3784232}},
   %doi={10.1112/topo.12056},
}
\bib{sauer+thumann}{article}{
   author={Sauer, Roman},
   author={Thumann, Werner},
   title={Topological models of finite type for tree almost automorphism
   groups},
   journal={Int. Math. Res. Not. IMRN},
   date={2017},
   number={23},
   pages={7292--7320},
   %issn={1073-7928},
   %review={\MR{3801421}},
   %doi={10.1093/imrn/rnw232},
}

\bib{schlichting}{article}{
   author={Schlichting, G},
   title={Operationen mit periodischen Stabilisatoren},
   journal={Archiv der Mathematik (Basel)},
   volume={34},
   date={1980},
   number={2},
   pages={97-99},
   %issn={1420-8938},
   %doi={10.1007/BF01224936},
}

\bib{shalom+willis}{article}{
   author={Shalom, Yehuda},
   author={Willis, George A.},
   title={Commensurated Subgroups of Arithmetic Groups, Totally Disconnected Groups and Adelic Rigidity},
   journal={Geom. Funct. Anal.},
   volume={23},
   date={2013},
   number={5},
   pages={1631--1683},
   %issn={1420-8970},
   %doi={10.1007/s00039-013-0236-5},
}
\bib{szymik+wahl}{article}{
   author={Szymik, Markus},
   author={Wahl, Nathalie},
   title={The homology of the Higman-Thompson groups},
   journal={Invent. Math.},
   volume={216},
   date={2019},
   number={2},
   pages={445--518},
   %issn={0020-9910},
   %review={\MR{3953508}},
   %doi={10.1007/s00222-018-00848-z},
}
\bib{tzanev}{article}{
   author={Kroum Tzanev},
   title={Hecke C*-algebras and amenability},
   journal={J. Operator Theory},
   volume={50},
   date={2003},
   number={1},
   pages={169-178},
}
         
\bib{weibel}{book}{
   author={Weibel, Charles A.},
   title={An introduction to homological algebra},
   series={Cambridge Studies in Advanced Mathematics},
   volume={38},
   publisher={Cambridge University Press, Cambridge},
   date={1994},
   pages={xiv+450},
   %isbn={0-521-43500-5},
   %isbn={0-521-55987-1},
   %review={\MR{1269324}},
   %doi={10.1017/CBO9781139644136},
}

    \end{biblist}
\end{bibdiv}

\end{document}